\newtheorem{theorem}{Theorem}[section]
\newtheorem{lemma}[theorem]{Lemma}
\theoremstyle{definition}
\newtheorem{remark}[theorem]{Remark}
\theoremstyle{definition}
\newtheorem{definition}[theorem]{Definition}
\theoremstyle{definition}
\newcommand\cbrk{\text{$]$\kern-.15em$]$}} 
\newcommand\opar{
\text{\,\raise.2ex\hbox{${\scriptstyle |}$}\kern-.34em$($}} 
\newcommand{\mysection}[1]{\section{#1}
\setcounter{equation}{0}}
\newcommand{\nlimsup}{\operatornamewithlimits{\overline{lim}}}
 \def\dashint{%
 \operatorname%
 {\,\,\text{\bf--}\kern-.98em\DOTSI\intop\ilimits@\!\!}}
\def\bN{\mathbb{N}}
\def\bR{\mathbb{R}}
\def\fR{\mathfrak{R}}
\def\cF{\mathcal{F}}
\def\cH{\mathcal{H}}
\def\cR{\mathcal{R}}
\def\cV{\mathcal{V}}
\def\cB{\mathcal{B}}
\begin{document}

    \title[Wong-Zakai approximation]{Wong-Zakai approximation and support theorem for semilinear SPDEs with finite dimensional noise in the whole space} 
	
	\author{Timur Yastrzhembskiy}
\email{yastr002@umn.edu}
\address{127 Vincent Hall, University of Minnesota,
 Minneapolis, MN, 55455}

\keywords{SPDE, Wong-Zakai approximation, Stroock-Varadhan's support theorem, Krylov's $L_p$-theory of SPDEs}

\subjclass[2010]{35R60, 60H15}

   \begin{abstract}
    In this paper we consider the following stochastic partial differential equation (SPDE) in the whole space:
     $
	du (t, x) = [a^{i j} (t, x) D_{i j} u(t, x)  
							+ f(u, t, x)]\, dt + \sum_{k = 1}^m g^k (u(t, x)) dw^k (t).
	$
	We prove the convergence 
	 of  a Wong-Zakai type approximation scheme of the above equation
	 in the space
	 $
	   C^{\theta } ([0, T], H^{\gamma}_p (\bR^d))
	$
	in probability,
	 for some 
	$
	 \theta \in (0,1/2), 
	 			  \gamma \in (1, 2)$,
	 and  $p > 2$.
	We also prove a Stroock-Varadhan's type support theorem.
	To prove the results we combine
	 V. Mackevi\v cius's ideas from his papers on Wong-Zakai theorem 
	and the support theorem
	for diffusion processes
	with   N.V. Krylov's $L_p$-theory of SPDEs.  
	
  \end{abstract}

\maketitle

	\mysection{Introduction}
				\label{section 0}
           It was first noted  and proved by
	 E. Wong and M. Zakai 
	in  \cite{WZ_65b}
	(see also \cite{WZ_65a})
	that if $w_n$ is a sequence of  approximations of
	 a standard Wiener process $w$,
	 then, under certain conditions
	on $w_n, \sigma, x_0$,
	 the solution of the equation 
	$$
		dx_n (t) =  \sigma (x_n (t))\, dw_n (t), \quad x_n (0) = x_0
	$$
	 converges locally uniformly a.s. to the solution of the equation
	 $$
		dx (t) =  \sigma (x (t)) \, dw (t)  +  1/2\, (\sigma D\sigma) (x (t))  \, dt, \quad x(0) = x_0.
	 $$
	Here, $D \sigma$ is the derivative of $\sigma$.
	After this result was discovered, 
	there has been an extensive research around this phenomenon
	in stochastic differential equations (SDEs).
	Let us mention two articles that were the starting point of this work.
	In \cite{M_85} V. Mackevi\v cius  proved a Wong-Zakai type theorem for an SDE 
	driven by a multidimensional semimartingale
	essentially by integrating by parts in a stochastic integral,
	and later I. Gy\"ongy in \cite{G_88_SDE} 
	generalized his result by using the same technique. 
	By the way, one can modify the above approximation scheme,
	 so that the limiting equation does not have the correction term 
	$
	  1/2 \, (\sigma D\sigma) (x(t))\, dt
	$.
	In particular,  by the same method  of \cite{M_85}
	we have 
	$y_n \to y$
	 as
	 $n \to \infty$ locally uniformly  in probability,
	where
	$$
		  dy_n (t) = \sigma (y_n(t)) \, dw_n (t) - 1/2 (\sigma D\sigma) (y_n (t)) \, dt,    \quad y_n (0) = x_0, 
	$$
	     $$
			dy (t) = \sigma (y(t)) \, dw(t), \quad y(0) = x_0.
	    $$
	This fact can be used in practice to approximate certain SDEs.

	It turns out that a similar phenomenon occurs in stochastic evolution equations
	 that are, roughly speaking, described as follows:
	  \begin{align*}
		du( \omega, t) &= [ A( \omega, t) u ( \omega, t) + B (\omega, t,  u(\omega, t))] \, dt\\
			& +  \sum_{k = 1}^m G^k (\omega, t, u(\omega, t))\, dw^k (t).
	   \end{align*}
	Here,
	 $ A(\omega, t), 
				B( \omega, t, \cdot), 
								G^k (\omega, t, \cdot)$ 
	 are mappings between some Banach spaces, 
	 $A(\omega, t)$
	 is a linear map,
	 and  $w^k$  is a sequence of independent standard Wiener processes,
	where
	$k = 1, \ldots, m$,
	and  $m$ is finite.
	In this case, one expects that the sequence of the solutions of the equations
	 with regularized noise $w^k_n$
	converges to the solution of the equation with some correction term.
	There is a number of papers with Wong-Zakai type theorems
	established for various instances of operators
	 $A, B, G^k$.
	Here, we discuss only those articles that cover the case when 
	$A (\omega, t)$
	 is a second order elliptic operator. 
	The case when 
	$
	   G^k (\omega, t, \cdot)
	$ 
	is a linear operator 
	was covered in many articles such as 
	\cite{AT_84}, 
	\cite{BCF_88},
	\cite{BF_95},
	\cite{FO_14},
	  \cite{G_88_approx} - \cite{G_91}.
	The rate of convergence was obtained in 
	\cite{GS_06} 
	(see also \cite{S_05}) 
	and \cite{GS_12}.
	Results for the nonlinear case can be found in 
	\cite{DGT_12},
	\cite{N_04},
	\cite{TZ_06},
	 \cite{T_96}.
	We also  mention papers 
	\cite{BMS_95},
	\cite{CWM_01},
	\cite{HP_15},
	 \cite{HL_15}
	  on  one-dimensional parabolic SPDEs
	with infinite-dimensional noise  (i.e. $m = \infty$).
	Of these works only in \cite{HL_15} is the noise term  linear in $u$,
	and in the others it is semilinear.

	  Let $d \geq 1$
	 be an integer, and let
 	$\bR^d$ 
	be a Euclidean space of points 
	$ 
		x = (x_1, \ldots, x_d).
	$
	In this article we consider an SPDE 
	in $\bR^d$ of the following form:
	\begin{equation}
				\label{1.6}
                           du (t, x) = [a^{i j} (t, x) D_{i j} u (t, x)\, dt +   f (u, t, x)  ]\, dt  
	\end{equation}
	  $$
			           + \sum_{k = 1 }^m   g^k (u(t, x))  \, dw^k (t), 
					\quad u (0, x) =  u_0 (x).
	 $$
	Here and throughout this article we assume the summation  with respect to indexes $i, j$.
	The assumptions are stated in the Section \ref{section 1}.
	Let us just mention that $a$ is a uniformly nondegenerate bounded matrix-valued function,
	which is Lipschitz in $x$,
	and $f (u, t, x)$ is a 'first-order' term.
	We construct  Wong-Zakai approximations by regularizing  $w^k$ and
	subtracting the Stratonovich correction term:
	    \begin{equation}
				\label{1.7}
	                    du_n (t, x)  = [a^{i j} (t, x) D_{i j} u_n (t, x)  + f(u_n, t, x)
	    \end{equation}
	     $$
			        - 1/2 \, \sum_{k  =1}^m (g^k D g^k) (u_n (t, x)) ]\, dt 
														+ \sum_{k = 1 }^m   g^k (u_n (t, x))  \, dw^k_n (t),
			\, \, u_n (0, x) = u_0 (x).
	     $$
	
	 We prove the convergence of the approximation sequence
	  to the solution of \eqref{1.6}
	  in the space 
	$
	  C^{\gamma/2 - 1/p} ([0, T], X)
	$
		 in probability,
	 where
	$
		X = H^{2 - \mu}_p (\bR^d)
	$
	 is the space of Bessel potentials,  
	 and $p > 2 + d$,
	and $\gamma$ and $\mu$ are numbers
	such that 
	$
	  1 - d/p > \mu > \gamma > 2/p.
	$
	In addition, we also prove a Stroock-Varadhan's type support theorem 
	for the solution of the equation \eqref{1.6}. 
	Here, we will use V. Mackevi\v cius's approach
	 to the characterization of the topological support of a diffusion process (see also \cite{MS}).
	 In \cite{M_86} he showed that for SDEs the support theorem
	can be proved using a Wong-Zakai type approximation result 
	combined with Girsanov's theorem.
	Later, in \cite{G_88_stab}
	  (see also \cite{G_91})
	 I. Gy\"ongy, adopting methods from  \cite{M_86},
	proved a support theorem 
	 for a linear SPDE on the whole space 
	with a finite dimensional noise term. 
	In \cite{N_04}  
	and \cite{T_97} 
	 support theorems were proved
	 for SPDEs in a Hilbert space $H$ driven by an $H$-valued Wiener process.
	Let us also mention 
	 papers \cite{BMS_95} and \cite{CWM_01}
	where  support theorems were established 
	for  solutions of a nonlinear heat equation and Burgers equation
	driven by a  space-time white noise.

	We briefly explain how the aforementioned articles on Wong-Zakai problems  differ from this work.
	  The results of 
	  \cite{N_04}, 
	   \cite{TZ_06}, 
	    \cite{T_96}
	imply the convergence of the Wong-Zakai approximations
	 of the equations of  type  \eqref{1.6}
	in  the space 
	$C ([0, T], H)$,
	 where $H$ is a Hilbert space,
	and the convergence holds either in probability or in distribution.
	Perhaps, the closest to ours result was proved in \cite{DGT_12}.
	For any positive integer $p$, 
	and  $\kappa \in  (0, 1 - d/p)$,
	  pathwise convergence in the  space 
	  $
	    C^{\kappa/2} ([0, T], H^{\kappa}_p(\bR^d))
	  $ 
	 of Wong-Zakai approximations was proved for 
	 the equation \eqref{1.6}
	 with 
	$a^{i j} \equiv \delta_{i j},
		 f \equiv 0, 
		    h \equiv 0$
	 and a space-dependent nonlinearity in the stochastic term
	 (i.e.  
	$
	g^k (u (t, x))
	$
	 is replaced by 
	$
	g^k (x, u (t, x))
	$).
	In addition, the authors explained why a similar 
	result should be true for a nonlinear SPDE in the divergence form
	(i.e 
	$
	  \Delta u (t, x)
	$ 
	is replaced by
	 $
		D_{i} (a^{i j} (x) D_j u (t, x))
	$).
	However, in \cite{DGT_12} it is assumed that 
	for each $y \in \bR$
	the function
	  $x \to g^k (x, y)$
	 has a compact support.
	This rules out the case that we are interested in.
          Finally, in the papers  
	\cite{BMS_95}, 
	 \cite{CWM_01}, 
	  \cite{HP_15} 
	SPDEs are not considered on the whole space.
	
	Let us delineate the key steps of the proof of the Wong-Zakai type theorem of this article.
	First, following V. Mackevi\v cius (see \cite{M_85}),
	 we split
	 $w^k_n$
	into a 'regular part'
	 $w^k_n - w^k$
	 and the noise term
	 $w^k$.
	Since
	 $w^k_n - w^k$
	 converges to $0$ as $n \to \infty$, 
	it makes sense to integrate by parts in the integral 
	$g^k (u_n (t, x)) \, d[w^k_n (t) - w^k (t)]$. 
	It turns out that to get rid of the 'divergent' term
	from the equation \eqref{1.6},
	one needs to integrate by parts one more time.
	As a result, we obtain that a function related to the error of the approximation scheme
	satisfies a certain SPDE. 
	We show that this error converges to $0$
	 by  using an a priori estimate from N.V. Krylov's $L_p$-theory of SPDEs,
	and this allows us to prove the desired convergence.

	Finally, this author would like to thank his advisor N.V. Krylov
	for the statement of the problem, useful suggestions and attention to this work.
	The author is also grateful to the organizers of RISM school on "Developments in SPDEs in honour of G. Da Prato",
	where he had an opportunity to present the results of this paper and discuss it with other participants.

		\mysection{ Statement of the main results}
								\label{section 1}
		\textit{Basic notations and definitions.}
		Let 
		$(\Omega, \mathcal{F}, P)$
		 be a complete probability space, 
		and  let 
		$(\mathcal{F}_t, t \geq 0)$
		 be an increasing filtration of $\sigma$-fields 
		$\mathcal{F}_t \subset \mathcal{F}$
		 containing all $P$-null sets of $\Omega$.
		By $\mathcal{P}$ 
		we denote the predictable $\sigma$-field
		 generated by 
		 $(\mathcal{F}_t, t \geq 0)$.
		Let $m$ be a positive integer,
		 and 
		$\{w^i (t), t \in \bR, i = 1, \ldots, m\}$
		 be a sequence of independent standard Wiener processes
		such that 
		$w^i (t) = 0, t \leq 0 \, \forall i$.
		
		In this paper we consider only  real-valued functions.
		Denote when it makes sense
		$$
		  	D_i = \frac{\partial}{\partial x_i},    \quad
				D_{ i j} = \frac{\partial^2}{\partial x_i \partial x_j},
		\quad
			\partial_t  = \frac{\partial }{\partial t},
		   $$
		and, for a function $u$, we denote by $u_{xx}$  the matrix of second order derivatives of $u$.
		For a function
		 $f:\bR \to \bR$, 
		and any integer $k \geq 2$,
		 we denote
			$$
				D f = \frac{df}{dx}, 
							\quad D^k f = \frac{d^k f}{dx^k}. 
			$$

		Let
		 $B(\bR^d)$
		 be the space of bounded Borel functions, 
		$C^k (\bR^d)$ 
		be the space of bounded $k$ times differentiable functions 
		with bounded derivatives up to order $k$,
		$C^{\infty}_0 (\bR^d)$
		 be the space of infinitely differentiable functions with compact support,
		$C^{k + \alpha} (\bR^d)$
		 be the usual H\"older space, 
		where $k$ is a nonnegative integer, 
		and 
		$\alpha \in (0,1)$.
		For a Banach space $X$,
		 and finite $T > 0$, 	
		we denote by 
		$C^{k + \alpha} ([0, T], X)$
		the space of all $X$-valued functions 
		that are H\"older continuous in the time variable.
		In case $X = \bR$, we omit writing $X$
		inside the parenthesis. 
		For  $p \in (1, \infty]$,
		 we denote by
		 $L_p (\bR^d)$
		the space of $L_p$-integrable functions,
		and by 
		$
		  W^k_p (\bR^d)
		$ 
		and
		 $
		    W^{r, k}_p (T): = W^{r, k}_p ([0, T] \times \bR^d)
		$
		 we mean the usual Sobolev space 
		and parabolic Sobolev space (see Chapters 1 and 2 of \cite{Kr_08}).
		We introduce the spaces of Bessel potentials as follows:
		 $$
			H^{\gamma}_p (\bR^d): = (1 - \Delta)^{- \gamma/2} L_p (\bR^d), 
				\quad
															H^{\gamma}_p (\bR^d, l_2) : =  (1 - \Delta)^{- \gamma/2} L_p (\bR^d, l_2).
		$$
		Here, $\gamma \in \bR$, 
		and $l_2$ is the set of all sequences of real numbers 
		$
		   h = (h^k, k \geq 1)
		$ 
		such that
		 $
		   |h|^2_{l_2} =  \sum_{ k = 1}^{\infty} |h^k|^2 < \infty.
		$
		 For a distribution $f$
		 and a sequence of distributions
		 $
		   h = (h^k, k \geq 1),
		  $
		 we denote when it makes sense
		$$
		 	|| f ||_{\gamma, p} = || ( 1 - \Delta)^{\gamma/2} f ||_p, 
														\quad	|| h ||_{\gamma, p} = || |(1 - \Delta)^{\gamma/2} h|_{l_2} ||_p,
		$$
		where $||\cdot||_p$ stands for the $L_p$ norm.
		For a distribution $f$ and a test function
		 $
		   g\in C^{\infty}_0 (\bR^d),
		$
		we denote the action of $f$ on $g$
		by  
		   $
			(f, g).
		   $

		By $N (\ldots)$ we denote a constant depending only on
		the parameters inside the parenthesis.
		A constant $N$ might change from inequality to inequality.
		In some cases where it is clear what parameters $N$ depends on,
		we  omit listing them.

		The following facts about
		 $
		   H^{\gamma}_p (\bR^d)
		 $
		 spaces will be used in the sequel sometimes without mentioning them.
			First, for a nonnegative integer $\gamma$,
			 the spaces 
			$
			   W^{\gamma}_p (\bR^d)
			$ 
			and 
			$
				H^{\gamma}_p (\bR^d)
			$
			coincide as sets and
			have equivalent norms, 
			i.e  there exists 
			$N (d, p, \gamma) > 0$ 
			such that, 
			for all
			 $
			   u \in H^{\gamma}_p (\bR^d)
			$,
			 $$
				N  || u ||_{\gamma, p} \leq  || u ||_{ W^{\gamma}_p (\bR^d) } \leq N^{-1} || u ||_{\gamma, p}.
			 $$
			  Second, 
			$$
				|| f ||_{\gamma_1, p} \leq || f ||_{\gamma_2, p}
			$$
			if 
			$
			   \gamma_1 \leq \gamma_2, p > 1.
			$
			 The proof of these facts
			 and the detailed discussion of
			 $
			   H^{\gamma}_p (\bR^d)
			$
			 spaces can be found in Chapter 13 of \cite{Kr_08}.

		For any stopping time $\tau$, we denote 	
		$
		   \opar 0, \tau \cbrk = \{ (\omega, t): 0 < t \leq \tau(\omega) \},
		$
		 and
		  $$
			\mathbb{H}^{\gamma}_p (\tau) := L_p ( \opar 0, \tau \cbrk, \mathcal{P}, H^{\gamma}_p (\bR^d)),
				\quad
 				\mathbb{H}^{\gamma}_p (\tau, l_2) := L_p ( \opar 0, \tau \cbrk, \mathcal{P}, H^{\gamma}_p (\bR^d, l_2)),
		$$
		$$
			\mathbb{L}_p (\tau) := L_p (\opar 0, \tau \cbrk, \mathcal{P}, L_p (\bR^d)).
		$$
		
		We define stochastic Banach spaces
		 $
			\cH^{\gamma}_p (\tau).
		$
		\begin{definition}
	   	For any 
		$
		p \geq 2, \gamma \in \bR,
		$
		 and  any stopping time $\tau$,
		 we write that
		 $
		   u \in \mathcal{H}^{\gamma}_p (\tau)
		$ 
		if
		\begin{enumerate} 
		  \item $u$ is a distribution-valued process,
			 and 	$u \in  \cap_{T > 0} \mathbb{H}^{\gamma}_p ( \tau \wedge T)$,  
		 \item $u_{xx} \in \mathbb{H}^{\gamma - 2}_p (\tau)$,
		 									$u(0, \cdot) \in L_p (\Omega, \mathcal{F}_0, H^{ \gamma - 2/p }_p (\bR^d))$,
		\item there exist
		 $f \in \mathbb{H}^{\gamma - 2}_p (\tau)$
		 and 
		 $
		   h = (h^k, k \geq 1) \in 
	 	    						\mathbb{H}^{\gamma - 1}_{p} (\tau, l_2)
		$ 
		such that, for any 
		$
		   \phi \in C^{\infty}_0 (\bR^d),
		$
	 	and  any 
		$
		    t \geq 0, \omega \in \Omega,
		$
		\begin{equation}
				\label{2.3}
			\begin{aligned}
			(u (t \wedge \tau, \cdot), \phi) = &( u(0, \cdot), \phi) + 
						\int_0^{t \wedge \tau}  (f(s, \cdot), \phi) \, ds\\
		 & + 	\sum_{k = 1}^{\infty} \int_0^{t \wedge \tau} (h^k (s, \cdot), \phi) \, dw^k (s).
			\end{aligned}
		\end{equation}
		\end{enumerate}
		The norm is defined in the following way:
		$$
		    || u ||_{ \cH^{\gamma}_p (\tau) } :=  || u_{xx} ||_{\mathbb{H}^{\gamma - 2}_p (\tau)} 
		$$
		 $$
		       + || f ||_{\mathbb{H}^{\gamma - 2}_p (\tau) }   +  || h ||_{\mathbb{H}^{\gamma - 1}_p (\tau, l_2) } +
				  (E   || u (0, \cdot) ||^p_{  \gamma - 2/p, p} )^{1/p}.
		 $$

		For $u \in \cH^{\gamma}_p (\tau)$,
		 we denote  $\mathbb{D} u := f$,
		 $\mathbb{S} u := h$.
		\end{definition}
		
			\begin{remark}
			By Remark 3.2 of \cite{Kr_99}
			if
			 $
			   h \in \mathbb{H}^{\gamma}_p (\tau, l_2)
			 $, 
			for some 
			$
			 \gamma \in \bR, p \geq 2
			$, 
			then, for any 
			$
			\phi \in C^{\infty}_0 (\bR^d)
			$, 
			and any number $T > 0$,
			 the series of stochastic integrals 
			$\sum_{ k = 1}^{\infty} \int_0^t (h^k (s, \cdot), \phi) \, dw^k(s)$ 
			converges uniformly 
			in $t$ on $[0, T]$ in probability.
			\end{remark}

		\begin{remark}
				\label{remark 1.5}
		It was showed in Theorem 3.7 of \cite{Kr_99} that,
		for any 
		$
		  \gamma \in \bR, p \geq 2,
		$
		$
		\cH^{\gamma}_p (\tau)
		$
		 is a Banach space.
		Also  by the same theorem
		 if $T > 0$ is finite,
		 and $\tau \leq T$ is a stopping time,
		 then, 
		for any 
		$
		  v \in \cH^{\gamma}_p (\tau),
		$
		  \begin{equation}
					\label{1.5.0}
			|| v ||_{\mathbb{H}^{\gamma}_p (\tau) } \leq N (d,  T) || v ||_{\mathcal{H}^{\gamma}_p (\tau) }.
		  \end{equation}	  
		It follows that, for any bounded stopping time $\tau$,
		 we may replace 
		$ 
		  || u_{xx} ||_{\mathbb{H}^{\gamma - 2}_p (\tau)}
		$ by
		   $ 
			|| u ||_{\mathbb{H}^{\gamma}_p (\tau) }
		   $
		 in the definition of the norm of
		 $
		   \mathcal{H}^{\gamma}_p (\tau)
		  $
		  and obtain an equivalent norm.
		\end{remark}

		\begin{remark}
			\label{remark 1.6}
		Let 
		 $p > 2, T > 0$
		 be finite, 
		and let
		 $\theta$ and $\mu$ be numbers such that
		$
		  1 > \mu  > \theta > 2/p.
		$
		Also let
		 $\tau \leq T$
		 be a stopping time. 
		Then, by Theorem 7.2 of \cite{Kr_99}, for any
		$
			u \in \cH^{\gamma}_p (\tau)
		$,
		there exists a modification of $u$
		such that
		we have 
		$
			u \in C^{\theta/2 - 1/p} ([0, T], H^{\gamma - \mu}_p (\bR^d))
		$, 
		for any $\omega$.
		In addition,
			$$
				E || u ||^p_{ C^{\theta/2 - 1/p} ([0, T], H^{\gamma - \mu}_p (\bR^d)) } 
					\leq N (d, p, \theta, \mu, T)   E  || u ||^p_{ \cH^{\gamma}_p (\tau) }.
			$$
		Further, by the embedding theorem for 
		$H^{\gamma}_p (\bR^d)$ spaces,
		  for any non-integer $\nu$ such that
		 $
			\nu \in (0,  \gamma - \mu - d/p)
		$, 
		we have 
		$
		   u \in  C^{\theta/2 - 1/p} ([0, T], C^{\nu} (\bR^d))
		$, 
		for any $\omega$.
	\end{remark}

		\begin{definition}
		  We say that $u$ is a solution of \eqref{1.6}
		of class 
		$
		  \mathcal{H}^{\gamma}_p (\tau)
		$ 
		 if
		 $
		    u \in  \mathcal{H}^{\gamma}_p (\tau)
		$
		with
		$$
			\mathbb{D} u (t, x) = a^{i j} (t, x) D_{i j} u(t, x) + f (u, t, x),
		$$
		 $$
				 \mathbb{S} u(t, x) = (g^k (u(t, x)), k = 1, \ldots, m), 
														\quad  u (0, x) = u_0 (x)  \in H^{\gamma  - 2/p}_p (\bR^d).
		 $$
		Note that this implies that
		$
		   f(u, t, x) \in \mathbb{H}^{\gamma - 2}_p (\tau)
		$,
		and
		 $
			g^k (u (t, x)) \in \mathbb{H}^{ \gamma - 1}_p (\tau), k = 1, \ldots, m
		 $.
		\end{definition}

		\textit{Assumptions.}
		Fix some finite $p \geq 2, T > 0$.
 
		$(A1)$   
		   $
			a^{i j}(t, x) = a^{i j} (\omega, t , x),  i, j = 1, \ldots d
		    $ 
		are 
		 $\mathcal{P} \times B(\bR^d)$ - measurable functions.
		In addition, there exists 
		$\lambda > 0$ such that,
		   for all 
		$ t \geq 0, x, \xi \in \bR^d, i, j, \omega$,
		 	$$
			\lambda |\xi|^2  \leq a^{i j } (t, x) \xi_i \xi_j \leq \lambda^{-1} |\xi|^2.
		 	$$

		$(A2a) $		
		  For any $\varepsilon > 0$,
		 there exists a constant $\kappa_{\varepsilon} > 0$ such that,
		for all $ i, j, t, \omega$,
		$$
			|a^{i j} (t, x) - a^{i j} (t, y) | \leq \varepsilon
		$$
		  if $|x - y| \leq \kappa_{\varepsilon}$.

		$(A2b)$ There exists a constant $L>0$ such that, for all $i, j, t, x, y, \omega$,
		$$
			|a^{i j} (t, x) - a^{i j} (t, y)| \leq L |x - y|. 
		$$

		$(A3) (p)$  
		 $f(u, t, x)$ is a function
		defined for any 
		$
		 \omega \in \Omega, 
						u \in H^1_p (\bR^d), 
										t \geq 0, 
												x \in \bR^d
		$ 
		such that
		the following assumptions hold:

		 $(i)$ for any
		     $
		     u \in  H^1_p (\bR^d),
		    $
		  $ f (u, t, x)$  is a predictable
		      $L_p (\bR^d)$-valued function;
	 	   
		   $(ii)$ 
		    $f(0, \cdot, \cdot) \in \mathbb{L}_p (T)$;
		      
		      $(iii)$ 
		        there exists a constant $K > 0$
	     	         such that, for any 
			$u, v \in H^1_p (\bR^d), t, x,  \omega$,
			   we have
		        $$
			    ||f (u, t, x) - f(v, t, x) ||_{p} \leq K || u - v ||_{1, p}.
		         $$

		$(A4a)$ For each 
		$k \in \{ 1, \ldots, m\}$,
		 $
		  g^k (x) = c_k x, x \in \bR
		  $,
		 where  $c_k \in \bR$.

		$(A4b)$
		  For each 
		$
		   k \in \{1, \ldots, m\},
		$
		 $
		     g^k  \in C^{2} (\bR),
		  $
		 and $g^k (0) = 0$.

		$(A5) (p)$  	$u_0  \in L_p (\Omega, \mathcal{F}_0, H^{2  - 2/p}_ p(\bR^d))$.

		$(A6) (p)$    
		  For any $i$, 
		  $w^i_n$ 
		is an 
		$\mathcal{F}_t$-adapted piecewise 
		$C^1_{loc}$ approximation of $w^i$,
		and, for any $i, j$, we  denote
		$$
		     \delta w^i_n (t) = w^i (t) - w^i_n (t),
		$$
	         $$
				s^{i j }_n (t) = \int_0^t \delta w^i_n (r) \, dw^j_n (r) - \delta_{i j} t/2.
		 $$
	
		We assume that the following holds:

		$(i)$ there exists a constant $\kappa > 0$ such that,
			for any $i, \omega, t$,   
		   $
		 	 |D w^i_n (t)| \leq \kappa;
		   $		

		$(ii)$	 for any $\varepsilon \in (0, 1/2), i, j$,
			$$
			          || \delta w^i_n  ||_{ C^{ \varepsilon}   [0, T] }  +  || s^{i j}_n ||_{ C^{\varepsilon}  [0, T] }  \to 0
			$$
			 as $n \to \infty$ in probability;

		$(iii)$	for any $i, j$,
			    $$
			            \lim_{R \to \infty} \sup_{n} P (  \int_0^T  | D s^{ i j }_n (t)|^p \, dt > R) = 0.
		              $$

		\begin{remark}
				\label{remark 1.2}
		Let   
		 $h (x)$  be a Lipschitz function
		 such that $h(0) = 0$, 
		and $c(t, x), b^i (t, x)$ 
		be 
		 $\mathcal{P} \times B(\bR^d)$-measurable functions,
		 and $ f(t, x) \in \mathbb{L}_p (T), p \geq 2$.
			For any 
			$u \in H^1_p (\bR^d), t, x, \omega$, 
			we put
			$$
				f(u, t, x) =   b^i (t, x) D_i u(x) +  c (t, x) h(u(x)) +  f (t, x).
			$$
		It is easy to see that $f(u,t, x)$ satisfies $(A3) (p)$.
		\end{remark}

		\begin{remark}
		 It is proved in Appendix A that 
		the polygonal approximation defined by \eqref{4.2} 
		satisfies the assumption $(A6) (p)$.
		\end{remark}

		\textit{Remarks on the existence and uniqueness of the solution of \eqref{1.6} }
		
		We fix any finite $p \geq 2, T > 0$ and
		 assume 
		$(A1)$,	
		  $(A2a)$,
		    $(A3) (p)$,
		    $(A4a)$,
		   $(A5)(p)$.
		  Then, by Theorem 5.1 of \cite{Kr_99} with $n = 0$,
		$$
			f (z, t, x) = f (z, t, x), 
		\quad g (z, t, x) = (c_k z (x), k = 1, \ldots, m), \, \, z \in H^2_p (\bR^d),
		$$
		 there exists a unique solution $u$ 
		 of class  
		$
		    \cH^{2}_p (T)
		$
		  of the equation \eqref{1.6}.
	
		Next,  assume 
		$(A2b)$ and $(A4b)$ instead of $(A2a)$ and $(A4a)$.
		Again, by Theorem 5.1 
		with $n = -1$,
		$$
		    f (z, t, x) = f(z, t, x), \quad 
		 						g (z, t, x) = (g^k (z(x)), k = 1, \ldots, m), \, \, z \in H^1_p (\bR^d),
		 $$		  
                      the equation \eqref{1.6}
                       has a unique solution $u$  
		of class
		 $
		    \cH^{1}_p (T),
		$
		 and there exists a constant $N$ independent of $u$ such that
		\begin{equation}
				\label{2.9}
			|| u ||^p_{\mathcal{H}^{1}_p (T)} \leq 
											N (|| f (0, \cdot, \cdot)||^p_{\mathbb{L}_p (T) }  +   E || u_0 ||^p_{ 1 - 2/p }).
		\end{equation}
		We will show that 
		 $
		    u \in \mathcal{H}^{2}_p (T)
		$ 
		by using the so-called bootstrap method.
		Let 
		$$
			f (t, x) = f(u, t, x), \quad
		 					 g (t, x) = (g^k (u(t, x)), k = 1, \ldots, m).
		$$
		We claim that 
		$ 
		 f (t, x) \in \mathbb{L}_p (T),
		 $
		and 
		$
		 g (t, x) \in \mathbb{H}^{1}_p (T, l_2).
		 $
		By Theorem 3.7 of \cite{Kr_99}, 
		$
		  u (t, \cdot) \in H^1_p (\bR^d),
		 $
		for almost every
		 $
		  	 t \in [0, T], \omega.
		$
		Then, by $(A3) (p)$ and \eqref{1.5.0}
		we have
		\begin{equation}
				\label{2.10}
			E  \int_0^T || f (t, \cdot) ||^p_p  \, dt 
					\leq
				   N E \int_0^T || f (0, t, \cdot) ||^p_p  \, dt  +
					N E  \int_0^T || u (t, \cdot)||^p_{1, p} \, dt
		\end{equation}
		  $$
			\leq N	|| f (0, \cdot, \cdot)||^p_{\mathbb{L}_p (T) } +
														N || u ||^p_{\mathcal{H}^{1 }_p (T) }.
		  $$
		By the same argument 
		combined with  Lemma \ref{lemma 4.1}
		and $(A4b)$
		we get that 
		$
		   g^k (u(t, x)) \in \mathbb{H}^{1}_p (T)
		$
		 in the following way:
		\begin{equation}
				\label{2.11}
			E \int_0^T  || g^k ( u (t, \cdot) ) ||^p_{1, p} \, dt 
													\leq   N E \int_0^T || u(t, \cdot) ||^p_{1, p} \, dt \leq
																								N   || u ||^p_{\mathcal{H}^{1}_p (T) }.
		\end{equation}
		Next,  consider the equation \eqref{1.6}
		with 
		$ f (t, x)$
		 and 
		 $ g (t, x)$
		instead of 
		 $ f(u, t, x) $
		 and 
		 $
		   g (u, t, x) 
		  $
		respectively.
		Then, by  Theorem 5.1  of \cite{Kr_99} with $n = 0$
		this equation 
		has a unique solution
		 $v$ of class 
		$
		 \mathcal{H}^{ 2}_p (\tau).
		 $
		Since 
		 $
		  \mathcal {H}^{2}_p (T) \subset \mathcal {H}^{1}_p (T),
		 $
		$v$ is also a solution of \eqref{1.6} of class
		 $
		   \mathcal {H}^{1}_p (T),
		  $
		and, hence, 
		  $
		    v \equiv u
		  $	
		  as elements of 
		$
		  \cH^1_p (T).
		$
		 In addition, by \eqref{2.9} - \eqref{2.11}   we have
		 $$
			|| u ||^p_{\mathcal{H}^{2}_p (T) } \leq 
											N (|| f  ||^p_{\mathbb{L}_p (T) } +
																		 || g ||^p_{\mathbb{H}^{1}_p (T, l_2) } 
																											+ E || u_0 ||^p_{2 - 2/p, p})
		$$
		   $$
			 \leq N || f (0, \cdot, \cdot) ||^p_{ \mathbb{L}_p (T) }  +  N E || u_0 ||^p_{ 2 - 2/p, p }.
		   $$

		\textit{Statement of the main results.}
			Here is the statement of a Wong-Zakai type theorem.
 			\begin{theorem}
						\label{theorem 2.1}
			Let   	$ T > 0$, 
			$p > d + 2$, 
			$\theta \in (0, 1)$
			 be numbers.
			Assume the following:

			 $(i)$ $(A1)$, (A3)(p), (A5)(p), $(A6) (p)$ hold;
	
			$(ii)$ 
 			either $(A2a), (A4a)$ or $(A2b), (A4b)$ hold;

			$(iii)$
			 $ D^2 g \in C^{\theta} (\bR)$.
		
			Let $u$ and $u_n$ 
			be the unique solutions of class $\cH^2_p (T)$
			 of the equations \eqref{1.6} and \eqref{1.7} respectively (see Remark \ref{remark 1.7} (i)).
			Then, for any numbers $\mu$ and $\gamma$
			such that $1 - d/p > \mu > \gamma > 2/p$,
			 we have
			$$
				|| u - u_n ||_{\cV (T)} \to 0  
			$$  
			as  $n \to \infty$ in probability, 
			where 
			$
				\cV (T): = C^{ \gamma/2 - 1/p } ([0, T], H^{2  -  \mu}_p  (\bR^d)).
			$
			\end{theorem}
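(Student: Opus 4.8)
\medskip
\noindent\emph{Proof plan.}
Set $v_n:=u-u_n$ and fix $\mu,\gamma$ as in the statement. The plan is to imitate, at the level of the SPDE, V.~Mackevi\v cius's double integration-by-parts device, so as to represent $v_n$ — up to corrections that obviously tend to $0$ — as the solution of a semilinear SPDE with the same principal part $a^{ij}D_{ij}$ and vanishing data, to which N.V.~Krylov's $L_p$-estimate applies; this is preceded by a localization. First I would record a \emph{uniform} a priori bound: since $dw^k_n(t)=Dw^k_n(t)\,dt$, equation \eqref{1.7} is semilinear with no martingale part, and its drift nonlinearity $f(z,t,x)-\tfrac12\sum_k(g^kDg^k)(z)+\sum_k Dw^k_n(t)\,g^k(z)$ satisfies $(A3)(p)$ with a constant independent of $n$ by $(A6)(i)$ and the hypotheses on $g$; hence \eqref{2.9} together with the bootstrap argument of the remarks on existence and uniqueness yields $\sup_n E\|u_n\|^p_{\cH^2_p(T)}<\infty$, and, by Remark~\ref{remark 1.6}, $\{u_n\}$ is bounded uniformly in $n$ in $C^{\gamma/2-1/p}([0,T],H^{2-\mu}_p(\bR^d))$ and in $C([0,T],C^{1+\nu}(\bR^d))$ for a small $\nu>0$. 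Given $\delta>0$ I would choose $R$ and stopping times $\tau_n\le T$ — the first time $\|u_n\|_{\cH^2_p(t)}$, $\int_0^t\sum_{i,j}|Ds^{ij}_n|^p\,dr$ or $\max_k\sup_{[0,t]}|w^k|$ reaches level $R$ — with $\sup_n P(\tau_n<T)<\delta$, which is possible by $(A6)(iii)$ and the uniform bound; it then suffices to prove $\|v_n\|_{\cV(\tau_n)}\to0$ in probability.

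\smallskip
\noindent The algebraic heart is as follows. Writing $w^k_n=w^k-\delta w^k_n$ and subtracting \eqref{1.7} from \eqref{1.6}, the martingale part of the equation for $v_n$ is $\sum_k[g^k(u)-g^k(u_n)]\,dw^k+\sum_k g^k(u_n)\,d(\delta w^k_n)$. Because $u_n$ has no martingale part, It\^o's product rule and the chain rule give $\int_0^t g^k(u_n)\,d(\delta w^k_n)=g^k(u_n(t))\delta w^k_n(t)-\int_0^t\delta w^k_n\,Dg^k(u_n)\,\partial_s u_n\,ds$; writing $\partial_s u_n=P_n+\sum_j g^j(u_n)Dw^j_n$ with $P_n:=a^{ij}D_{ij}u_n+f(u_n)-\tfrac12\sum_l(g^lDg^l)(u_n)$ and using $\delta w^k_n(s)\,dw^j_n(s)=ds^{kj}_n(s)+\tfrac12\delta_{kj}\,ds$, one arrives at
\[
 \sum_k\int_0^t g^k(u_n)\,d(\delta w^k_n)=\sum_k g^k(u_n(t))\delta w^k_n(t)-\sum_k\int_0^t\delta w^k_n Dg^k(u_n)P_n\,ds-\sum_{k,j}\int_0^t Dg^k(u_n)g^j(u_n)\,ds^{kj}_n-\tfrac12\int_0^t\sum_k(g^kDg^k)(u_n)\,ds.
\]
The last integral cancels exactly the Stratonovich correction carried by \eqref{1.7} — this cancellation is the reason \eqref{1.7} is the correct scheme. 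Hence the modified error $\tilde v_n:=v_n-\sum_k g^k(u_n)\delta w^k_n=:v_n-\Psi_n$ vanishes at $t=0$ and, after inserting $v_n=\tilde v_n+\Psi_n$ on the right and splitting $f(u)-f(u_n)=\{f(u-\Psi_n)-f(u-\Psi_n-\tilde v_n)\}+\{f(u)-f(u-\Psi_n)\}$ and similarly for each $g^k$, solves a semilinear SPDE with drift $a^{ij}D_{ij}\tilde v_n+\hat f_n(\tilde v_n)+\rho^1_n-\sum_{k,j}Dg^k(u_n)g^j(u_n)Ds^{kj}_n$ and noise coefficients $\hat g^k_n(\tilde v_n)+\rho^{2,k}_n$, where $\hat f_n(z):=f(u-\Psi_n)-f(u-\Psi_n-z)$ and $\hat g^k_n(z):=g^k(u-\Psi_n)-g^k(u-\Psi_n-z)$ vanish at $z=0$ and are Lipschitz with $n$-independent constants on the relevant ball, $\rho^1_n$ gathers $a^{ij}D_{ij}\Psi_n$, $f(u)-f(u-\Psi_n)$ and $-\sum_k\delta w^k_n Dg^k(u_n)P_n$, and $\rho^{2,k}_n:=g^k(u)-g^k(u-\Psi_n)$; none of $\rho^1_n,\rho^{2,k}_n$ depends on $\tilde v_n$.

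\smallskip
\noindent I would then split off the one forcing that is not obviously small: write $\tilde v_n=\tilde w_n+\chi_n$, where $\chi_n$ solves, pathwise, $\partial_t\chi_n=a^{ij}D_{ij}\chi_n-\sum_{k,j}Dg^k(u_n)g^j(u_n)Ds^{kj}_n$, $\chi_n(0)=0$, and $\tilde w_n$ solves the SPDE above with that forcing removed. Each summand of $\rho^1_n,\rho^{2,k}_n$ is a product of a factor bounded in probability uniformly in $n$ — norms of $u_n$, $\Psi_n$ and $P_n$ controlled by the first step and $(A6)(i)$, and compositions with $g^k\in C^2$ estimated by Lemma~\ref{lemma 4.1} and its variants — and a factor going to $0$ in probability by $(A6)(ii)$, namely $\|\delta w^k_n\|_{C^\varepsilon[0,T]}$, $\varepsilon\in(0,1/2)$; since the localization reduces the matter to an $L_p(\Omega)$-bound, this gives $\|\rho^1_n\|_{\mathbb{L}_p(\tau_n)}+\sum_k\|\rho^{2,k}_n\|_{\mathbb{H}^1_p(\tau_n,l_2)}\to0$. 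As $\hat f_n,\hat g^k_n$ satisfy the structural conditions of Krylov's solvability-and-estimate theorem with $n$-independent constants (the bounds on $u$, $u_n$, $\Psi_n$ on $[0,\tau_n]$ being uniform), and $\tilde w_n\in\cH^2_p(\tau_n)$, the $L_p$-estimate with zero initial data yields $\|\tilde w_n\|_{\cH^2_p(\tau_n)}\le N\bigl(\|\rho^1_n\|_{\mathbb{L}_p(\tau_n)}+\sum_k\|\rho^{2,k}_n\|_{\mathbb{H}^1_p(\tau_n,l_2)}\bigr)\to0$, whence $\|\tilde w_n\|_{\cV(\tau_n)}\to0$ by Remark~\ref{remark 1.6}. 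Also $\|v_n-\tilde v_n\|_{\cV(\tau_n)}=\|\Psi_n\|_{\cV(\tau_n)}\to0$, being a product of $C^\varepsilon$-small scalars with compositions bounded in $C^{\gamma/2-1/p}([0,T],H^{2-\mu}_p)$. So the proof reduces to $\|\chi_n\|_{\cV(\tau_n)}\to0$; granting this, $\|v_n\|_{\cV(\tau_n)}\to0$ and $\delta\downarrow0$ concludes.

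\smallskip
\noindent The estimate for $\chi_n$ is where I expect the main difficulty. Since $(A6)(iii)$ gives only tightness of $\int_0^T\sum_{i,j}|Ds^{ij}_n|^p\,dt$, not smallness, one must transfer the smallness of $\|s^{kj}_n\|_{C^\varepsilon}$ onto the forcing by integrating by parts once more in time — legitimate because $u_n$, hence $Dg^k(u_n)g^j(u_n)$, is absolutely continuous in $t$ with $L^1$-in-time derivative bounded uniformly in $n$, by $(A6)(i)$ and the first step. This produces a boundary term $Dg^k(u_n(t))g^j(u_n(t))\,s^{kj}_n(t)$ (whose contribution to $\chi_n$ is small in the parabolic norm once seen through the semigroup) plus a genuinely small integral term of the type $\rho^1_n$; but under the minimal hypothesis $D^2g\in C^\theta$ the coefficient $Dg^k(u_n)g^j(u_n)$ has only $H^{\min(1+\theta,\,2-\mu)}_p$-regularity in space, so $\chi_n$ cannot be fed to the $\cH^2_p$-machinery directly and must be estimated through the evolution generated by $a^{ij}D_{ij}$, using that $\gamma<\mu$ to work in $\cH^{\gamma'}_p$ with $\gamma'\in(2-\mu+\gamma,2)$ and in intermediate Bessel-potential spaces, so that the required smoothing bounds close with only $C([0,T],L_p)$-control on that coefficient. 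Executing this last step, and reconciling it with the localization and the composition estimates, is the technical core; what remains is bookkeeping.
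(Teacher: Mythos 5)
Your algebraic decomposition mirrors the paper's Lemma \ref{lemma 2.4} in the case $\alpha=1,\ \beta=0$: the split $w^k_n=w^k-\delta w^k_n$, the first integration by parts against $g^k(u_n)\,d\delta w^k_n$, the identity $\delta w^k_n\,Dw^j_n=Ds^{kj}_n+\tfrac12\delta_{kj}$, and the exact cancellation of the Stratonovich correction are precisely Step~1 of the paper's proof of Theorem \ref{theorem 2.3}. Your localization by stopping times, the use of the $\cH^2_p$ a priori estimate of Theorem~5.1 of \cite{Kr_99}, and the Chebyshev finish are likewise in the spirit of Steps~4--6 there.

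There is, however, a real gap at precisely the place you label the ``technical core,'' and the fix you propose does not close it. The forcing for $\chi_n$ (equally, the boundary term produced by a further integration by parts in time) involves $D_{ij}\bigl[Dg^k(u_n)\,g^j(u_n)\bigr]$, hence $D^3g^k$; under the hypothesis $D^2g\in C^\theta(\bR)$ the spatial regularity of $Dg^k(u_n)g^j(u_n)$ is at best $H^{\min(1+\theta,\,2-\mu)}_p$. To place $a^{ij}D_{ij}$ of this quantity in $\mathbb H^{\gamma'-2}_p$ with $\gamma'\in(2+\gamma-\mu,2)$, so that the embedding of Remark \ref{remark 1.6} lands you in $\cV(T)$, you would need $1+\theta\ge\gamma'>2+\gamma-\mu$, i.e.\ $\theta>1+\gamma-\mu>(d+2)/p>0$. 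The theorem is stated for every $\theta\in(0,1)$, so this fails whenever $\theta$ is small, and no admissible choice of $\gamma'$ rescues it; the semigroup-smoothing variant meets the corresponding non-integrable singularity of the Duhamel kernel. The ingredient missing from your sketch is the mollification device of Lemma \ref{lemma 3.1}: the paper replaces $g^lDg^k$ by $h^{kl}_{\varepsilon}$ (at scale $\varepsilon^{1/2}$) \emph{before} the second integration by parts, subtracting $\sum_{k,l}h^{kl}_\varepsilon(v)\,s^{kl}_n$ from the error rather than the un-mollified $\sum Dg^k(u_n)g^j(u_n)\,s^{kj}_n$. Then every needed derivative of $h^{kl}_\varepsilon$ exists, with $\|D^{q+1}h^{kl}_\varepsilon\|_{C^\theta}\le N\varepsilon^{-q/2}$, the localization $\|s^{kl}_n\|_{C^{\gamma/2-1/p}[0,\tau_n]}\le\varepsilon$ overcompensates the $\varepsilon^{-1/2}$ loss, and the approximation error $|g^lDg^k-h^{kl}_\varepsilon|\le N\varepsilon^{1/2}|x|$ is itself $O(\varepsilon^{1/2})$. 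That trade of smallness in time for regularity in the nonlinearity is what keeps the entire argument inside $\cH^2_p$, and without it the estimate for $\chi_n$ cannot be closed as you have outlined it.
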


		Here is the statement of the support theorem.		
		\begin{theorem}
					\label{theorem 2.2}
		Assume the conditions and the notations of Theorem \ref{theorem 2.1}
		and assume that
		 $
		   a^{i j} (t, x), f (u, t, x), u_0 (x)
		$
		 are nonrandom functions.
		Take any numbers
		 $\gamma$ and $\mu$ such that  
		 $
		    1 -  d/p > \mu > \gamma > 2/p.
		 $
		Let $\cH (T)$ be the set of
		all $\bR^m$-valued functions
		 $
		   h = (h^k, k = 1, \ldots, m)
		  $
		such  that each $h^k$ is a
		  Lipschitz function on $[0, T]$,
		and $h^k (0) = 0$.
		For any
		 $h \in \cH (T)$,
		 we set $\cR (h)$
		 to be the unique solution of class 
		$
		  W^{1, 2}_p (T)
		$
		 (see Remark \ref{remark 1.7} (ii))
		 of the following equation:
		\begin{equation}
			\label{2.6}
		   \begin{aligned}
			\partial_t z (t, x) &  = a^{i j} (t, x) D_{i j}  z (t, x) 
			    +  f (z, t, x) - 1/2  \sum_{ k = 1}^m (g^k D g^k) (z (t, x))   \\
			     &  + \sum_{k = 1}^m g^k (z (t, x))  D h^k (t) , \quad z (0, x) = u_0 (x).
		   \end{aligned}
		 \end{equation}
		We denote
		 $
			\fR = \{  \cR h: h \in \cH (T)\}
		$
		 and let $\fR_{cl}$ 
		be the closure of $\fR$
		 in the space 
		$
		 \cV (T).
		$
		 Let  $u$ be the unique solution of \eqref{1.6} of class $\cH^2_p (T)$,
		$P \circ u^{-1}|_{\cV (T)}$
		 be its distribution  in  $\cV (T)$,
		and 
		 $\text{supp} \, P \circ u^{-1}|_{\cV (T)}$ 
		be the support of this measure.
		Then, 
		$
			\text{supp} \, P \circ u^{-1}|_{\cV (T)} =  \fR_{cl}.
		$
		\end{theorem}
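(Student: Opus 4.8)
The plan is to follow V. Mackevi\v cius's strategy for diffusion processes, combining the Wong-Zakai theorem (Theorem \ref{theorem 2.1}) with the Girsanov transformation. The argument splits into two inclusions: $\fR_{cl} \subseteq \mathrm{supp}\, P\circ u^{-1}$ and $\mathrm{supp}\, P\circ u^{-1} \subseteq \fR_{cl}$. For the first inclusion I would fix $h \in \cH(T)$ and choose a sequence of smooth adapted approximations $w^k_n$ of $w^k$ satisfying $(A6)(p)$ — concretely the polygonal approximations of Appendix A — but then \emph{shift} them: replace $w^k_n$ by $w^k_n + h^k$ (equivalently, condition on the event that the polygonal approximation is uniformly close to $h^k$, which has positive probability). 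Under this shift the equation \eqref{1.7} for $u_n$ becomes, after rewriting the It\^o differential in terms of $dw^k_n$, an equation whose drift picks up the extra term $\sum_k g^k(u_n)\,Dh^k(t)$ and whose remaining noise-regularization pieces $\delta w^k_n = w^k - (w^k_n+h^k)$ still satisfy the hypotheses of $(A6)(p)$ with respect to the \emph{same} Wiener processes. Applying Theorem \ref{theorem 2.1} to this perturbed scheme shows $u_n \to \cR(h)$ in $\cV(T)$ in probability along this conditioning; since the conditioning event has positive probability, $\cR(h)$ lies in the support, and taking closure gives $\fR_{cl} \subseteq \mathrm{supp}\,P\circ u^{-1}$.

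For the reverse inclusion $\mathrm{supp}\,P\circ u^{-1} \subseteq \fR_{cl}$, I would argue that $P(u \in \cV(T)\setminus \fR_{cl}) = 0$, i.e. that almost surely $u$ is a $\cV(T)$-limit of elements of $\fR$. The idea is: fix a polygonal mesh of width $1/n$ on $[0,T]$ and let $h_n$ be the (random) piecewise-linear interpolation of the Wiener paths $w^k$; then $h_n \in \cH(T)$ almost surely, and I claim $\cR(h_n) \to u$ in $\cV(T)$ in probability. This is again a Wong-Zakai statement, but now the roles are reversed — one compares the PDE $\cR(h_n)$ (driven by $Dh_n\,dt$) with the SPDE $u$ (driven by $dw^k$) — and it can be extracted from the same a priori $L_p$-estimates of Krylov used to prove Theorem \ref{theorem 2.1}, applied to the difference $u - \cR(h_n)$. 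One must also use the Girsanov theorem here: strictly, to convert ``convergence in probability of $\cR(h_n)$ to $u$'' into ``$u \in \fR_{cl}$ a.s.'' one changes the underlying measure so that $w^k - h_n^k$ becomes a Wiener process, uses that $\cR(h_n)$ is then the solution of an equation of the type \eqref{1.6} under the new measure, and invokes equivalence of the measures to transfer the null set back. Passing along a subsequence gives a.s. convergence, hence $u \in \fR_{cl}$ a.s.

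The main obstacle I anticipate is the \emph{measure-change step combined with the non-Markovian, infinite-dimensional setting}: unlike finite-dimensional diffusions, here the Girsanov density must be controlled while simultaneously keeping the $\cH^2_p$-regularity of the solution map $h \mapsto \cR(h)$ and the uniformity of the Wong-Zakai convergence in the relevant events. Concretely, one needs the convergence in Theorem \ref{theorem 2.1} to be robust under the shift $w^k_n \rightsquigarrow w^k_n + h^k$ — which requires checking that $(A6)(p)(i)$–$(iii)$ are preserved (the bound $(i)$ costs an extra $\|Dh^k\|_\infty$, and $(iii)$ needs $Dh^k \in L_p[0,T]$, both fine for Lipschitz $h^k$) — and then verifying that the drift perturbation $\sum_k g^k(u_n)\,Dh^k$ is an admissible ``first-order'' term in the sense of $(A3)(p)$, which follows from Remark \ref{remark 1.2} together with Lemma \ref{lemma 4.1} and the local Lipschitz continuity of $g^k$ on the range of the solution. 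A secondary technical point is that $\cR(h)$ is defined as a $W^{1,2}_p(T)$-solution while the approximation theorem lives in $\cH^2_p$; one reconciles these by the bootstrap argument already carried out in the text for \eqref{1.6}, noting that the deterministic equation \eqref{2.6} is just the $m=0$ (or ``zero-noise'') case of the SPDE machinery and that $W^{1,2}_p(T)$ embeds in $\cV(T)$ by Remark \ref{remark 1.6} with the stochastic part absent.
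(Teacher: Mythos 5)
Your high-level framework (Mackevi\v cius: Wong--Zakai plus Girsanov) matches the paper, but the execution of the first inclusion has a concrete gap and misses the key device, and the second inclusion is overcomplicated.

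\textbf{First inclusion.} The ``shift'' $w^k_n \rightsquigarrow w^k_n + h^k$ does not preserve $(A6)(p)(ii)$: with that shift one has $\delta w^k_n = w^k - (w^k_n + h^k) \to -h^k \neq 0$, so $\|\delta w^i_n\|_{C^\varepsilon[0,T]} \not\to 0$ in probability. You check that $(i)$ and $(iii)$ survive the shift but silently pass over $(ii)$, which is exactly the one that fails. Moreover, even if the shifted scheme converged, its limit would be an SPDE with extra drift $\sum_k g^k\,Dh^k$ and noise $dw^k$ -- not the deterministic PDE $\cR(h)$. The paper instead uses Theorem \ref{theorem 2.3} (not Theorem \ref{theorem 2.1}) with $\alpha = -1$, $\beta = 1$ and a modified drift $\bar f = f + \sum_k g^k Dh^k - \tfrac12 \sum_k g^k Dg^k$: since $\alpha + \beta = 0$, the limiting equation \eqref{2.1} has \emph{no noise term} and reduces exactly to \eqref{2.6}, so $v = \cR(h)$. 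The auxiliary $v_n$ then satisfies an equation whose martingale part is $\sum_k g^k(v_n)\,d\bar w^k(\cdot, n)$ with $\bar w^k(t,n) := w^k(t) - w^k_n(t) + h^k(t)$, and the Girsanov density $\rho_n$ is applied \emph{explicitly}: under $P_n = \rho_n\,dP$, the processes $\bar w^k(\cdot, n)$ are independent Wiener processes, so $v_n$ solves an equation of the form \eqref{1.6} under $P_n$, whence $P_n \circ v_n^{-1}|_{\cV(T)} = P \circ u^{-1}|_{\cV(T)}$. Combining this identity with $P(\|v_n - \cR(h)\|_{\cV(T)} < \delta) > 0$ (from Theorem \ref{theorem 2.3}) and the equivalence $P \sim P_n$ yields $P(\|u - \cR(h)\|_{\cV(T)} < \delta) > 0$. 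Your proposal names Girsanov in the preamble but never actually carries out this measure change, and ``condition on the polygonal approximation being uniformly close to $h^k$'' is not a rigorous replacement for it.

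\textbf{Second inclusion.} Your plan here is essentially right in spirit but unnecessarily elaborate. For a.e.\ $\omega$, the Wong--Zakai approximation $u_n(\cdot,\omega)$ of \eqref{1.7} coincides with $\cR\bigl(w_n(\cdot,\omega)\bigr)$, because for fixed $\omega$ equation \eqref{1.7} \emph{is} equation \eqref{2.6} with $h = w_n(\cdot,\omega) \in \cH(T)$ (Lipschitz by $(A6)(p)(i)$). Thus $P(u_n \in \fR_{cl}) = 1$, and the claim ``$\cR(h_n)\to u$ in probability'' that you propose to prove is literally Theorem \ref{theorem 2.1}, not a new reversed Wong--Zakai statement. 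The paper finishes with the Portmanteau theorem for the closed set $\fR_{cl}$; your subsequence-to-a.s.-convergence route also works, but the additional Girsanov change of measure you suggest in Step~2 is superfluous and, as written (``so that $w^k - h_n^k$ becomes a Wiener process'' with $h_n$ random), not well-posed.
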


		To prove  the main results we  use the following approximation theorem.
			\begin{theorem}
					\label{theorem 2.3}
			Assume the conditions and the notations of Theorem \ref{theorem 2.1}.
		  Assume that either  
		$\alpha = 1,  \beta = 0$
		or
		 $\alpha =  -1, \beta  = 1$,
		 and let $v$ and $v_n$ 
		be the unique solutions of class
		 $
		   \mathcal{H}^2_p (T)
		$ 
		(see Remark \ref{remark 1.7} (i))
		 of the following SPDEs:
		\begin{equation}
				\label{2.1}
			dv (t, x) =  [a^{i j} (t, x) D_{i j} v (t, x) + f (v, t, x)] \, dt 
		  \end{equation}
		 	 $$
				+  (\alpha + \beta) \sum_{ k = 1}^m g^k (v(t, x)) \, dw^k (t),
					 \quad v(0, x) = u_0 (x),
			 $$
		   \begin{equation}
					\label{2.2}
			\begin{aligned}
			dv_n (t, x) =&  [a^{i j} (t, x) D_{i j} v_n (t, x) + f (v_n, t, x) 
					          + \alpha \sum_{ k = 1}^m g^k (v_n (t, x)) \, D w^k_n (t)\\ 
						    &- (\alpha^2/2 + \alpha \beta) \sum_{k = 1}^m (g^k D g^k) (v_n (t, x))]\, dt
			                 	       + \beta \sum_{ k = 1}^m g^k (v_n (t, x)) \, dw^k (t), \\
						         &   v_n (0, x) = u_0 (x).
			\end{aligned}
			\end{equation}
			Then,  we have
			$$
				|| v - v_n ||_{\cV (T)} \to 0
			$$  
			as $n \to \infty$ in probability.
			\end{theorem}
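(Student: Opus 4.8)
The overall strategy is the one of V. Mackevi\v cius: write $w^k_n=w^k-\delta w^k_n$ in \eqref{2.2}, integrate by parts twice in the resulting integral against $\delta w^k_n$ so that the subtracted It\^o--Stratonovich correction is recovered from the iterated It\^o integral, and control the remaining error by Krylov's a priori estimate. To begin, fix $\eta>0$. Using Theorem 5.1 of \cite{Kr_99} together with $(A3)(p)$, $(A4a)$/$(A4b)$ and Lemma \ref{lemma 4.1}, $(A5)(p)$, the bound $|Dw^k_n|\le\kappa$ from $(A6)(i)$, and Gronwall's lemma (and the same bootstrap as for \eqref{1.6}), one gets the uniform estimate $\sup_n \bE\|v_n\|^p_{\cH^2_p(T)}<\infty$; by Remark \ref{remark 1.6} the laws of $v_n$ are then tight in $\cV(T)$. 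Hence, invoking also a.s.\ finiteness of the corresponding quantities for $v$ and $(A6)(iii)$, one may choose $R$ so large that the probabilities of the events $\{\|v_n\|_{\cV(T)}>R\}$, $\{\int_0^T\|v_n(s,\cdot)\|^p_{2,p}\,ds>R\}$ (uniformly in $n$), and the analogues for $v$, for $\sum_{ij}\int_0^T|Ds^{ij}_n(s)|^p\,ds$ and for $\int_0^T\|f(0,s,\cdot)\|^p_p\,ds$, are all $<\eta$. Since it suffices to prove convergence in probability, I would also pass to a subsequence along which $\|\delta w^i_n\|_{C^\varepsilon[0,T]}+\|s^{ij}_n\|_{C^\varepsilon[0,T]}\to0$ a.s., with $\varepsilon=\gamma/2-1/p$; as $\|\delta w^i_n\|_{C[0,T]}^p$ and $\|s^{ij}_n\|_{C[0,T]}^p$ are dominated, uniformly in $n$, by a fixed integrable random variable, along this subsequence they converge to $0$ in $L_1(\Omega)$.

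Next comes the localization. Let $\tau_n\le T$ be the first time one of the quantities above reaches $R$, so that $\sup_n P(\tau_n<T)<\eta'$, $\eta'$ as small as we wish; on $\opar 0,\tau_n\cbrk$ all of $\|v\|_{\cV}$, $\|v_n\|_{\cV}$, $\int\|v\|^p_{2,p}$, $\int\|v_n\|^p_{2,p}$, $\int|Ds^{ij}_n|^p$, $\int\|f(0)\|^p_p$ are $\le R$, whence $\|v-v_n\|_{\mathbb{H}^2_p(\tau_n)}\le C(R)$ and, by the embedding $H^{2-\mu}_p\hookrightarrow C^{1+}$ (valid since $\mu<1-d/p$), the quantities $\|v_n(s,\cdot)\|_{C^{1+}}$, $\|g^k(v_n(s,\cdot))\|_{2,p}$, $\|Dg^k(v_n(s,\cdot))\|_\infty$, $\|D^2g^k(v_n(s,\cdot))\|_\infty$ are controlled. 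By Remark \ref{remark 1.6} it is enough to show $\bE\|v-v_n\|^p_{\cH^{\bar\gamma}_p(\tau_n)}\to0$ for a suitable $\bar\gamma\in(1,2)$, chosen large enough that $\cH^{\bar\gamma}_p(\tau_n)\hookrightarrow\cV(T)$ via Remark \ref{remark 1.6} and small enough (together with the additional parabolic-$L_p$ localization just introduced) that the error terms produced below lie in the right negative/low-order Bessel spaces; here one uses $v,v_n\in\cH^2_p(T)\subset\cH^{\bar\gamma}_p(\tau_n)$.

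Now the substitution and the iterated integration by parts. Replacing $Dw^k_n\,ds=dw^k_n=dw^k-d\delta w^k_n$ in \eqref{2.2}, the stochastic-type term becomes $(\alpha+\beta)\sum_k g^k(v_n)\,dw^k-\alpha\sum_k g^k(v_n)\,d\delta w^k_n$. Integrating by parts in $\int_0^t g^k(v_n)\,d\delta w^k_n$, using It\^o's formula for $g^k(v_n)$ (the martingale part of $v_n$ is $\beta\sum_l g^l(v_n)\,dw^l$, contributing the cross-variation $\langle g^k(v_n),\delta w^k_n\rangle_t=\beta\int_0^t(g^kDg^k)(v_n)\,ds$), produces a boundary term $g^k(v_n(t))\delta w^k_n(t)$, a term $\int_0^t\delta w^k_n\,Dg^k(v_n)\,[\text{remaining drift of }v_n]\,ds$, and, inside the latter, a term $\sum_m\int_0^t Dg^k(v_n)g^m(v_n)\,\delta w^k_n\,dw^m_n$; in this last one uses $\delta w^k_n(s)\,dw^m_n(s)=d\big(s^{km}_n(s)+\delta_{km}s/2\big)$ and integrates by parts once more, the $\delta_{km}s/2$-part yielding exactly $\tfrac12\int_0^t(g^kDg^k)(v_n)\,ds$. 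The coefficient $-(\alpha^2/2+\alpha\beta)$ of the subtracted Stratonovich term in \eqref{2.2} is chosen precisely so that it cancels against this $\tfrac12$-term and the cross-variation, while the surviving martingale part is $(\alpha+\beta)\sum_k g^k(v_n)\,dw^k$, the one in \eqref{2.1}. The conclusion of this step is that, up to boundary terms carrying a factor $\delta w^i_n(t)$ or $s^{ij}_n(t)$ and up to free drift/noise terms each carrying such a factor, $v_n$ satisfies an SPDE of the form of \eqref{2.1} (the $s^{ij}_n$-factors come from the second integration by parts via $\int s^{km}_n\,dG^{km}$, $G^{km}:=Dg^k(v_n)g^m(v_n)$, whose \emph{time} differential involves no $x$-derivatives of $G^{km}$, hence no second derivatives of $v_n$ beyond those already in the drift of $v_n$).

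It remains to estimate. Subtracting the boundary terms from $v_n$ to get a process $\rho_n$ solving an SPDE of the form of \eqref{2.1} with Lipschitz nonlinearities $f$, $(\alpha+\beta)g^k$ and extra free terms, one checks that each of these free terms, as well as $a^{ij}D_{ij}(v_n-\rho_n)$, $f(v_n)-f(\rho_n)$ and $g^k(v_n)-g^k(\rho_n)$, tends to $0$ in $\mathbb{H}^{\bar\gamma-2}_p(\tau_n)$, resp.\ $\mathbb{H}^{\bar\gamma-1}_p(\tau_n,l_2)$: each is bounded by the product of a scalar factor $\|\delta w^i_n\|_{C[0,T]}$ or $\|s^{ij}_n\|_{C[0,T]}$ (converging to $0$ in $L_1(\Omega)$) and a quantity bounded on $\opar 0,\tau_n\cbrk$ by a deterministic $C(R)$, using the localization of $\int\|v_n\|^p_{2,p}$, $\int\|f(0)\|^p_p$, $\|v_n\|_{\cV}$, the fact that $\bar\gamma-2<0$ (so the $(\bar\gamma-2)$-norm of a drift term is dominated by its $L_p$-norm, avoiding multiplier estimates in negative spaces), the bounds on $g^k\in C^2$, $D^2g^k\in C^\theta$, and the Lipschitz-in-$x$ character of the relevant functions of $v_n$ for the $\mathbb{H}^{\bar\gamma-1}_p$ terms. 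Krylov's a priori estimate (Theorem 5.1 of \cite{Kr_99}, applied as in the existence/uniqueness discussion) then gives $\bE\|\rho_n-v\|^p_{\cH^{\bar\gamma}_p(\tau_n)}\to0$; together with the fact that the boundary terms are likewise negligible (here the choice $\varepsilon=\gamma/2-1/p$ controls the time-H\"older seminorm) and $\sup_n P(\tau_n<T)<\eta'$, and since $\eta'$ is arbitrary, this yields $\|v_n-v\|_{\cV(T)}\to0$ in probability. The main obstacle is exactly this last step: organizing the numerous terms coming from the two integrations by parts so that the Stratonovich correction cancels exactly, and then verifying that each residual term vanishes in the appropriate low-order Bessel spaces while respecting simultaneously the limited $C^{2,\theta}$-regularity of $g$ and the presence of $D^2_{xx}v_n$ — which is what dictates the choice of $\bar\gamma$ and forces the extra parabolic-$L_p$ localization.
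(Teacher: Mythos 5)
Your overall strategy matches the paper's: follow Mackevi\v cius (split $dw^k_n=dw^k-d\delta w^k_n$, integrate by parts twice so that the Stratonovich correction cancels), localize with stopping times, apply Krylov's $L_p$ a priori estimate, and close with a stochastic Gronwall argument before passing from estimates up to $\tau_n$ to convergence in probability. You also correctly identify that the boundary terms must be subtracted to obtain a process that satisfies a better SPDE, and that the product rule in H\"older spaces (together with $\mu<1-d/p$) controls the time regularity of those boundary terms. But there is one genuine gap that, as stated, makes the argument break: the second integration by parts as you have written it cannot be carried out under the stated regularity of $g$.

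Concretely, you propose to It\^o-expand $G^{km}(v_n)$ with $G^{km}=Dg^k\cdot g^m$, after writing $\int s^{km}_n\,dG^{km}$. To apply It\^o's formula to $G^{km}(v_n)$ one needs $G^{km}\in C^2_{loc}$. But under the hypotheses $g^k\in C^2$ with only $D^2g^k\in C^\theta$, the function $Dg^k\cdot g^m$ has $D^2(Dg^k\cdot g^m)$ involving $D^3g^k$, which does not exist; so $G^{km}$ is merely $C^{1+\theta}$, and the second It\^o step is illegitimate. The paper resolves this by introducing in Lemma \ref{lemma 2.4} an abstract $C^2_{loc}$ function $h^{kl}$ (not $g^lDg^k$ itself) for the second integration by parts, applied to $v$ rather than $v_n$, then decomposing $R_n=R^{(1)}_n+R^{(2)}_n+R^{(3)}_n$ and choosing $h^{kl}=h^{kl}_\varepsilon$ to be a \emph{mollification} of $g^lDg^k$ at scale $\varepsilon^{1/2}$ (Lemma \ref{lemma 3.1}, estimates \eqref{2.5.3}--\eqref{2.5.5}). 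The resulting error $R^{(2)}_n$ carries a factor $\varepsilon^{p/2}$ from $|g^lDg^k-h^{kl}_\varepsilon|\le N\varepsilon^{1/2}|x|$, while the price for extra smoothness of $h^{kl}_\varepsilon$ shows up as negative powers of $\varepsilon$ in the higher derivative bounds, which the paper balances in the terms $I_{2,n},I_{5,n},I_{10,n},I_{13,n}$. Without this two-parameter ($n$ and $\varepsilon$) scheme there is no way to make the $D^3g^k$-type terms appear with a controllable coefficient. A related inaccuracy: you claim the time differential of $G^{km}(v_n)$ "involves no $x$-derivatives of $G^{km}$, hence no second derivatives of $v_n$ beyond those already in the drift of $v_n$" — in fact $dG^{km}(v_n)$ does bring in $DG^{km}(v_n)\,Lv_n$, containing $D_{ij}v_n$; the paper handles this by collecting the second-order terms into $L\bar v_n$ and absorbing the leftover into the quadratic term $M v_n=a^{ij}D_iv_nD_jv_n$ (the $F_{1,n},F_{2,n}$ terms in Lemma \ref{lemma 2.4}), which then requires the embedding $H^{2-\mu}_p\hookrightarrow W^1_{2p}$ using $d/(2p)+\mu<1$. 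Your remaining deviations (uniform $\sup_n E\|v_n\|^p_{\cH^2_p(T)}$ plus tightness, an a.s.\ subsequence, working in $\cH^{\bar\gamma}_p$ with $\bar\gamma<2$) are stylistic rather than structural; the paper instead localizes directly via $\gamma_n:=\inf\{t:\|v_n-v\|_{\cV(t)}\ge1\}$ and works in $\cH^2_p$ throughout, which is simpler and avoids the need for a uniform-in-$n$ moment bound on $v_n$.
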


		\begin{remark}
		  \label{remark 1.7}
		Assume the conditions of Theorem \ref{theorem 2.1}.

		$(i)$ Here, we show that the equation \eqref{2.2}
		 has a unique solution $v_n$ of class $\cH^2_p (T)$.
		The same holds for \eqref{1.7} because when
		$\alpha =  1, \beta  = 0$, we have by uniqueness
		$
		   v_n \equiv u_n
		$ 
		in
		$\cH^2_p (T)$.
		 We use the same reasoning that we used to show 
		that \eqref{1.6} has a unique solution of class
		 $\cH^2_p (T)$. 
		   This time, we set
			$$
				\bar f (z, t, x) = f (z, t, x) 
					- (\alpha^2/2 + \alpha \beta) \sum_{ k = 1}^m (g^k D g^k) (z (x))
					 + \alpha \sum_{k  = 1}^m g^k (z (x))  D w^k_n (t),  
			$$
			 $$
				\bar g (z, t, x)  = (\beta g^k ( z (x)), k = 1, \ldots, m), 
				 	\, \,  z \in H^1_p (\bR^d).
			 $$
			It is easily seen that we only need to check that $\bar f$ and $\bar g$ 
			satisfy   Assumption 5.6 of \cite{Kr_99}
			 (with $n = 0$).
			Let us show this for $\bar f$.
			For any
			  $z, v \in H^1_p (\bR^d)$,
		         and  any 
			$
			 t, \omega, 
			$
			we have
			$$
			       || \bar f (z, t, \cdot)  - \bar f (v, t, \cdot)||_p \leq
					 || f (z, t, \cdot) - f (v, t, \cdot)||_p +  
			  	\sum_{k = 1}^m || g^k (z (\cdot)) - g^k (v (\cdot)) ||_p  | D w^k_n (t)| 
			$$
			 $$
				+ 1/2 \sum_{k = 1}^m  ||(g^k D g^k) (z (\cdot)) - (g^k  D g^k) (v (\cdot)) ||_p 
				      \leq   \bar K || z - v ||_{1, p},
 			 $$ 
			 where 
			$$
				\bar K = K +   \sum_{k = 1}^m ( \kappa || D g^k ||_{\infty} 
												   +  || D g^k  ||^2_{\infty}
					  												  + || g^k  D^2 g^k  ||_{\infty}),
			$$	
			and $K$ and $\kappa$ are the constants from $(A3) (p)$,
			and $(A6) (p)$ respectively. 

			$(ii)$ Set
			      $$
				   \bar f (z, t, x) = f(z, t, x)  - 1/2  \sum_{ k = 1}^m (g^k D g^k) (z (x)) +  \sum_{ k = 1}^m g^k ( z (x)) D h^k (t),
			      $$
				$$
					\bar g (z, t, x) \equiv 0,  \, \, z \in H^2_p (\bR^d).
				$$	
			Then, by Theorem 5.1 of \cite{Kr_99}
			 (with $n = 0$) the equation \eqref{2.6}
		       has a unique solution
			 $\cR (h)$ 
			of class 
			$
			 \cH^2_p (T).
			$ 
			Since $\cR (h)$ is a nonrandom function, 
			we have
			 $
			    \cR (h) \in W^{1, 2}_p (T).
			  $
		By the same argument, in case 
		$
		  \alpha  = -1, \beta  = 1,
		$
		the exists a unique solution 
		$
		  v \in W^{1, 2}_p (T)
		$
		 of \eqref{2.1}.
		\end{remark}

		  \mysection{Auxiliary Results.}
						\label{section 3}
                        \begin{lemma}
	                        		\label{lemma 3.1}
              Let $\theta \in (0, 1)$,
	      $
		  h \in C^{1 + \theta} (\bR),
	      $
		and $h (0) = 0$.
             Let $\rho$ be a 
	  	$
		   C^{\infty}_0 (\bR)
		$  
  	 	function such that 
	  	$
		   \int \rho (y) \, dy = 1.
		$
            	Denote 
	   	$
		   \rho_{\varepsilon} (x) = 1/\varepsilon \, \rho (x/\varepsilon),
		$
	    \begin{equation}
			    \label{3.1.1}
		h_{\varepsilon} (x) = (h \ast \rho_{\varepsilon}) (x) -   (h \ast \rho_{\varepsilon}) (0),
	 \end{equation}
		where $\ast$ stands for the convolution.
		Then, the following assertions hold:

		$(i)$   for any $x \in \bR$,
	  	   $$
			| h (x) -  h_{\varepsilon} (x) | \leq 
										N (\rho)  || D h ||_{ \infty } \,  \varepsilon;
	  	   $$

		$(ii)$ for any  $k = \{0, 1, \ldots \}$,
	          $$
		  	  || D^{k+1} h_{\varepsilon}  ||_{ C^{\theta}  }
												 \leq N (\rho, \theta, k) ||  D h ||_{ C^{\theta}  } \, 1/\varepsilon^k.
	     	  $$
	   \end{lemma}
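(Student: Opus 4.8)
The plan is to obtain both assertions directly from elementary mollification estimates, with no appeal to anything beyond the definition \eqref{3.1.1} and Young's inequality. For part $(i)$, since $h(0)=0$ and $\int \rho_\varepsilon = \int \rho = 1$, I would write
$$
h_\varepsilon(x) - h(x) = \int_{\bR} [h(x - y) - h(x)]\, \rho_\varepsilon(y)\, dy \; - \; \int_{\bR} [h(-y) - h(0)]\, \rho_\varepsilon(y)\, dy ,
$$
and then bound each integrand in absolute value by $\|Dh\|_\infty\, |y|\, |\rho_\varepsilon(y)|$, using the mean value theorem in the form $|h(a)-h(b)| \le \|Dh\|_\infty |a-b|$. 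The change of variables $y = \varepsilon z$ gives $\int_{\bR} |y|\,|\rho_\varepsilon(y)|\,dy = \varepsilon \int_{\bR}|z|\,|\rho(z)|\,dz =: N(\rho)\,\varepsilon$, which yields $(i)$ with constant $2N(\rho)$.

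For part $(ii)$, the first observation is that the additive constant $(h\ast\rho_\varepsilon)(0)$ in \eqref{3.1.1} is annihilated by any derivative, so $D^{k+1}h_\varepsilon = D^{k+1}(h\ast\rho_\varepsilon)$; distributing one derivative onto $h$ and the remaining $k$ onto the mollifier, this equals $(Dh)\ast (D^k\rho_\varepsilon)$ (with the convention $D^0\rho_\varepsilon := \rho_\varepsilon$). The one genuinely structural point is the elementary fact that convolution against an $L_1$ kernel does not increase the $C^\theta(\bR)$ norm: for $f \in C^\theta(\bR)$ and $g \in L_1(\bR)$,
$$
\|f \ast g\|_{C^\theta} \le \|f\|_{C^\theta}\, \|g\|_{L_1},
$$
which for the sup-norm is Young's inequality, and for the H\"older seminorm follows by writing $(f\ast g)(x) - (f\ast g)(x') = \int_{\bR} [f(x-y) - f(x'-y)]\,g(y)\,dy$ and estimating the bracket by $[f]_{C^\theta}\,|x-x'|^\theta$. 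Applying this with $f = Dh$ and $g = D^k\rho_\varepsilon$, together with the scaling identity $\|D^k\rho_\varepsilon\|_{L_1} = \varepsilon^{-k}\|D^k\rho\|_{L_1}$ (again by $y = \varepsilon z$), gives
$$
\|D^{k+1}h_\varepsilon\|_{C^\theta} \le \|Dh\|_{C^\theta}\,\|D^k\rho_\varepsilon\|_{L_1} = \|D^k\rho\|_{L_1}\,\|Dh\|_{C^\theta}\,\varepsilon^{-k},
$$
which is $(ii)$ with $N(\rho,\theta,k) = \|D^k\rho\|_{L_1}$ (so the $\theta$-dependence is in fact vacuous).

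I do not expect a real obstacle here, as the estimates are standard. The only places that require a little care are inserting absolute values throughout, since $\rho$ is not assumed nonnegative, and keeping track of the constant $(h\ast\rho_\varepsilon)(0)$ — it is precisely what normalizes $h_\varepsilon$ so that $h_\varepsilon(0)=0$, and it drops out of every derivative bound.
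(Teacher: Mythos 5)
Your proposal is correct and follows essentially the same route as the paper. The paper's proof of $(ii)$ consists of writing the identity $D^{k+1}h_{\varepsilon}(x) = \varepsilon^{-k}\int D^{k}\rho(y)\,Dh(x-\varepsilon y)\,dy$ (which, after the change of variables $y=\varepsilon z$, is exactly your $D^{k+1}h_{\varepsilon} = (Dh)\ast(D^{k}\rho_{\varepsilon})$) and declaring the claim to follow, while part $(i)$ is dismissed as standard; you have simply supplied the omitted elementary details (Young's inequality and the observation that convolution with an $L_{1}$ kernel does not increase the $C^{\theta}$ norm).
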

	\begin{proof}
	$(i)$ The proof is standard.

	$(ii)$  Clearly,  for any $k$,
	\begin{equation}
			\label{3.1.2}
		D^{k+1} h_{\varepsilon} (x) 
							   = 1/\varepsilon^k  \int  D^k \rho (y) \,  \, D h(x - \varepsilon y) \, dy,
	\end{equation}
	and from this the claim easily follows.
	\end{proof}

	  Denote when it makes sense
		$$
			L u (t, x) = a^{i j}(t, x)  D_{ i j} u(t, x),
		$$
		 \begin{equation}
					\label{3.0}
			M u (t, x) = a^{i j} (t, x) D_i u (t, x) D_j u (t, x).
		 \end{equation}

	       \begin{lemma}
	   			\label{lemma 2.4}
		 Assume the conditions and notations of Theorem \ref{theorem 2.3}.
		Let 
		$
		   h^{k l} :\bR \to \bR
		$
		 be a  function
		of class 
		 $
		   C^2_{loc} (\bR).
		$
		
		Denote 
		  $$
			\bar v_n (t, x) :=  v_n (t, x)  - v (t, x)  
		  $$
		   $$
			+  \alpha \sum_{ k  = 1}^m  g^k (v_n (t, x)) \delta w^k_n (t) 
														 -   \alpha^2 \sum_{k, l = 1}^m h^{k  l} (v (t, x))  s^{k  l}_n (t).
		   $$
		Then, there exist
		constants 
		$
		  N_k (\alpha, \beta), k =1, \ldots, 13
		$
		such that, for any 
		$\omega \in \Omega$, 
		$t \in [0, T]$,
		 $\psi \in C^{\infty}_0 (\bR^d)$, 
		the function $\bar v_n$
		satisfies the following equation: 
		    \begin{equation}
			\label{2.4.1}
				(\bar v_n (t, \cdot), \psi)
								   =   \int_0^t  (L \bar v_n (s, \cdot)  
																	+  \sum_{ q = 1}^{10}  N_q   F_{q, n} (s, \cdot), \psi) \, ds 
		    \end{equation}
			$$
			         +    \sum_{ r = 1}^m \int_0^t  ( N_{11} H^r_{1, n} (s, \cdot)
														 + N_{12} H^r_{2, n} (s, \cdot) + N_{13} H^r_{3, n} (s, \cdot), \psi) \, dw^r (s),
			$$
			where
			$$
				F_{1, n} (s, x) =  \sum_{ k =  1}^m (D^2 g^k) (v_n (s, x))  M v_n (s, x) \delta w^k_n (s),
			$$
			  $$
				F_{2, n} (s, x) =   \sum_{k, l = 1}^m (D^2 h^{k l}) (v (s, x))  M v (s, x) s^{k  l}_n (s),  
			  $$			  
			   $$
				F_{3, n} (s, x) =     f(v_n, s, x) -  f(v, s, x),  
			   $$
			     $$
				F_{4, n} (s, x) = \sum_{k =  1}^m f (v_n, s, x)   (D g^k) (v_n (s, x))  \delta w^k_n (s), 
			     $$
			      $$
				  F_{5, n} (s, x) =  \sum_{k, l = 1}^m f (v, s, x)  (D h^{k  l}) (v (s, x)) s^{k  l}_n (s), 
			      $$
			       $$
				    F_{6, n} (s, x)  =  \sum_{k, l = 1}^m  (g^l D g^l D g^k ) (v_n (s, x)) \delta w^k_n (s),
			       $$
			             $$
				            F_{7, n} (s, x)  =   \, \sum_{k, l = 1}^m  ( (g^l)^2 D^2 g^k) (v_n (s, x)) \, \delta w^k_n (s),
			             $$
				    $$
					  F_{8,  n} (s, x) =   \sum_{k, l = 1}^m  [(g^l D g^k) (v_n (s, x))  -  (g^l D g^k) (v (s, x))]  \, D s^{k l}_n (s),
				    $$
			                $$
				               F_{9, n} (s, x) =     \sum_{k, l = 1}^m  [(g^l D g^k) (v (s, x)) -  h^{k l} (v (s, x))]  \, D s^{k l}_n (s),
				     $$
				        $$
						F_{10, n} (s, x) =   \sum_{k, l, r = 1}^m ( (g^r)^2  D^2 h^{k l}) ( v(s, x)) \, s^{k l}_n (s),
				        $$
				         $$
					  	 H^r_{1, n} (s, x) =   g^r (v_n (s, x)) - g^r (v (s, x)),
				         $$
				          $$
					    	H^r_{2, n} (s, x) =  \sum_{k = 1}^m (g^r  D g^k ) (v_n (s, x)) \, \delta w^k_n (s),
				          $$	 
				           $$
					   	 H^r_{3, n} (s, x) =    \sum_{k, l = 1}^m ( g^r D h^{k l}) (v (s, x)) \, s^{k l}_n (s). 
				           $$	     
	
	     \end{lemma}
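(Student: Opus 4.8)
The plan is to compute the It\^o differential of $\bar v_n$ by differentiating its three summands $v_n-v$, $\alpha\sum_k g^k(v_n)\delta w^k_n$ and $-\alpha^2\sum_{k,l}h^{kl}(v)s^{kl}_n$ separately and then collecting terms. First note that $\bar v_n(0,\cdot)=0$ since $v_n(0,\cdot)=v(0,\cdot)=u_0$, $\delta w^k_n(0)=0$ and $s^{kl}_n(0)=0$, so \eqref{2.4.1} carries no initial term. Since $p>d+2$ and $v,v_n\in\cH^2_p(T)$, Remark \ref{remark 1.6} gives modifications with paths in $C^{\theta/2-1/p}([0,T],C^\nu(\bR^d))$ for some $\nu>0$; testing against $\psi$, one may mollify in $x$, apply the finite-dimensional It\^o formula to the genuine It\^o processes $(v_n\ast\rho_\varepsilon)(\cdot,x)$ and $(v\ast\rho_\varepsilon)(\cdot,x)$ whose coefficients are read off from \eqref{2.1}--\eqref{2.2}, and pass $\varepsilon\to0$. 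This justifies both the use of It\^o's formula for $g^k(v_n)$ and $h^{kl}(v)$ and the pointwise identities $L[g^k(v_n)]=(Dg^k)(v_n)Lv_n+(D^2g^k)(v_n)Mv_n$ and $L[h^{kl}(v)]=(Dh^{kl})(v)Lv+(D^2h^{kl})(v)Mv$ (valid because $Dv_n,Dv$ are bounded when $p>d$), which produce $F_{1,n}$ and $F_{2,n}$.

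Subtracting \eqref{2.1} from \eqref{2.2} yields $d(v_n-v)$ with drift $L(v_n-v)+f(v_n)-f(v)+\alpha\sum_k g^k(v_n)Dw^k_n-(\alpha^2/2+\alpha\beta)\sum_k(g^kDg^k)(v_n)$ and martingale part $\beta\sum_k g^k(v_n)\,dw^k-(\alpha+\beta)\sum_k g^k(v)\,dw^k$. For the second summand of $\bar v_n$ I would use $d(\delta w^k_n)=dw^k-Dw^k_n\,dt$ and the product rule for $g^k(v_n)\,\delta w^k_n$: the $-Dw^k_n\,dt$ part contributes $-\alpha\sum_k g^k(v_n)Dw^k_n\,dt$, which cancels the only drift term not explicitly small — this is precisely why the correction $\alpha\sum_k g^k(v_n)\delta w^k_n$ is built into $\bar v_n$. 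The cross-variation $d\langle g^k(v_n),\delta w^k_n\rangle=\beta(g^kDg^k)(v_n)\,dt$ combines with the Stratonovich term to leave $-\tfrac{\alpha^2}{2}\sum_k(g^kDg^k)(v_n)\,dt$, and inserting \eqref{2.2} into $\delta w^k_n\,d[g^k(v_n)]$ produces, besides the small contributions that become $F_{1,n},F_{4,n},F_{6,n},F_{7,n}$ and the martingale term $H^k_{2,n}$, the term $\alpha^2\sum_{k,l}(g^lDg^k)(v_n)\,\delta w^k_n Dw^l_n\,dt$. Here I would invoke the crucial algebraic identity $\delta w^k_n Dw^l_n=Ds^{kl}_n+\delta_{kl}/2$: the $\delta_{kl}/2$ piece exactly cancels the leftover $-\tfrac{\alpha^2}{2}\sum_k(g^kDg^k)(v_n)\,dt$, and $\alpha^2\sum_{k,l}(g^lDg^k)(v_n)Ds^{kl}_n\,dt$ remains.

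The third summand is handled the same way: $s^{kl}_n$ has finite variation with $ds^{kl}_n=Ds^{kl}_n\,dt$, hence no cross-variation, and inserting \eqref{2.1} into $\alpha^2\sum_{k,l}d[h^{kl}(v)s^{kl}_n]$ produces $-\alpha^2\sum_{k,l}h^{kl}(v)Ds^{kl}_n\,dt$, which with the term surviving from the previous paragraph gives $\alpha^2\sum_{k,l}[(g^lDg^k)(v_n)-h^{kl}(v)]Ds^{kl}_n\,dt=\alpha^2(F_{8,n}+F_{9,n})$ after writing $(g^lDg^k)(v_n)-h^{kl}(v)=[(g^lDg^k)(v_n)-(g^lDg^k)(v)]+[(g^lDg^k)(v)-h^{kl}(v)]$; the remaining contributions are $F_{5,n},F_{10,n}$ and the martingale term $H^k_{3,n}$, while $s^{kl}_n L[h^{kl}(v)]\,dt$ and $\delta w^k_n L[g^k(v_n)]\,dt$, together with $L(v_n-v)\,dt$ and the two $L$-identities above, combine into $L\bar v_n\,dt$. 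Finally, collecting the martingale terms $\beta\sum g^k(v_n)+\alpha\sum g^k(v_n)-(\alpha+\beta)\sum g^k(v)=(\alpha+\beta)\sum_k H^k_{1,n}$ and reading off numerical prefactors yields the constants, e.g. $N_1=-\alpha$, $N_2=\alpha^2$, $N_3=1$, $N_4=\alpha$, $N_5=-\alpha^2$, $N_8=N_9=\alpha^2$, $N_{11}=\alpha+\beta$, $N_{12}=\alpha\beta$, $N_{13}=-\alpha^2(\alpha+\beta)$, and similarly $N_6,N_7,N_{10}$.

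The main obstacle is not any single estimate but the bookkeeping: one must track about a dozen families of terms and verify that each either cancels or lands in exactly one of the $F_{q,n}$ or $H^r_{q,n}$ slots. The two genuinely structural inputs — which also dictate the precise form of $\bar v_n$ — are the cancellation of the $\alpha\sum_k g^k(v_n)Dw^k_n\,dt$ term (forcing the correction $\alpha\sum_k g^k(v_n)\delta w^k_n$) and the identity $\delta w^k_n Dw^l_n=Ds^{kl}_n+\delta_{kl}/2$, which converts the one remaining non-small drift term into $Ds^{kl}_n$ plus a piece killing the Stratonovich remainder (forcing the correction $-\alpha^2\sum_{k,l}h^{kl}(v)s^{kl}_n$). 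The only other care needed is analytic: justifying It\^o's formula and the identity for $L$ composed with a nonlinearity in the $\cH^2_p$ setting, as indicated in the first paragraph.
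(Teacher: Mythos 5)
Your proposal is essentially the paper's proof, reorganized but computationally identical: the paper integrates by parts on $I^{(3)}_n = \alpha\int_0^t (g^k(v_n),\psi)\,d[w^k_n - w^k]$ and then again on $R^{(3)}_n = \alpha^2\int_0^t h^{kl}(v)\,Ds^{kl}_n\,ds$, which produces the same two boundary corrections you build into $\bar v_n$ from the start; the key identity $\delta w^k_n Dw^l_n = Ds^{kl}_n + \delta_{kl}/2$, the $L$-chain rule producing $F_{1,n}, F_{2,n}$, and the cancellations you single out all appear identically, and the constants $N_q$ you list agree with what the paper's $I^{(\cdot)}_n$ and $R^{(\cdot)}_n$ decomposition yields. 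The only real divergence is technical hygiene: the paper justifies applying It\^o's formula to $(g^k(v_n(t)),\psi)$ by invoking Theorem 3.1 of \cite{Kr_13} (verifying its hypotheses via Remark \ref{remark 1.5}), whereas you sketch a mollify-in-$x$-and-pass-to-the-limit argument; your route is morally what underlies Krylov's theorem but leaves the passage $\varepsilon\to 0$ and the identification of the coefficients of $(v_n * \rho_\varepsilon)(\cdot,x)$ unverified, so citing the ready-made It\^o formula for $\cH^2_p$ processes is the cleaner choice.
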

      	       \begin{proof}
		In the proof we assume the summation with respect to indexes
			$
			   k, l, r \in \{1, \ldots, m\}.
			$
			For any two real-valued continuous semimartingales 
			$
			  A(t), B(t), t \geq 0,
			$ 
			by
			 $<A, B>(t)$ 
			we denote their mutual quadratic variation.
				For the sake of convenience, in this proof we omit the dependence of functions on the argument $x$.

                \textit{Step 1.} 
		           Following V. Mackevi\v cius in
				 \cite{M_85} 
			         and  I. Gy\"ongy in \cite{G_88_SDE}   
				we will split the third term in the equation \eqref{2.2} into
				$
				 \alpha g^k (v_n (t)) \, dw^k (t) 
				  $
				 and 
				$
				  \alpha g^k (v_n (t) )\, d[w^k_n (t)  - w^k (t)].
				$
				 Then we will integrate by parts in the second integral.
				From this we will get a 'boundary' term, an integral and a mutual quadratic  variation term,
				which is also an integral.
				The 'boundary' term  will be subtracted from $v_n  - v$,
				and the integrals, if necessary, will be further decomposed via It\^o's formula and integration by parts.
		 
				First, we subtract the equation \eqref{2.1} from \eqref{2.2},
				and we formally write the 'stochastic part' of the difference in the following way:
				$$
					\alpha g^k (v_n (t)) \, dw^k_n (t)  + \beta g^k (v_n (t)) \, dw^k (t) 
																		- (\alpha + \beta) g^k (v (t)) \, dw^k (t)
				$$
				 $$
					= \alpha g^k (v_n (t)) \, d[w^k_n (t)  - w^k (t)] 
														+ (\alpha + \beta) [g^k (v_n (t)) - g^k (v (t))] \, dw^k (t).
				 $$
				Fix any $\psi \in C^{\infty}_0 (\bR^d)$.
				  Then, by the above,
			      \begin{equation}
					\label{2.4.3}
					(v_n (t) - v (t), \psi) = \sum_{ q = 1}^5 I^{(q)}_{ n} (t),  
			      \end{equation}
				where
				$$
					I^{(1)}_{ n } (t) = \int_0^t (L [v_n (s) - v (s)], \psi) \, ds,
				$$
				 $$
					I^{(2)}_{ n } (t) =   \int_0^t ([f (v_n, s) - f (v, s)], \psi) \, ds,
				 $$
				  $$
					I^{(3)}_{ n } (t) = \alpha \int_0^t   (g^k (v_n (s)), \psi) \, d[w^k_n (s)  - w^k (s)], 
				  $$
				   $$
					I^{(4)}_{ n } (t) =  (\alpha + \beta) \int_0^t  ([g^k (v_n (s)) - g^k (v (s))], \psi) \, dw^k (s),
				   $$
				    $$
					 I^{(5)}_{ n } (t) = - (\alpha^2/2 + \alpha \beta) \int_0^t ((g^k D g^k) (v_n (s)), \psi) \, ds.
				    $$

			Next, we assume that the support of $\psi$ is contained in some ball
			 $
			   B_R (0) : = \{ x \in \bR^d : |x| \leq R\},
			 $
			and we set
			 $$
				\phi (h) : = \int h (x) \psi (x) \, dx, \, \, h \in L_2 (B_R (0)).
			  $$
			Using Remark \ref{remark 1.5},
			it is easy to check that all the conditions of Theorem 3.1 of \cite{Kr_13} hold.
			Then, by It\^o's formula applied to 
			$
			   \phi (g^{k} (v_n (t)), t \geq 0
			$
			we obtain 
			that this process is a semimartingale, and, moreover, 
			for any 
			$\omega, t$, 
			the following holds:
			\begin{equation}
					\label{2.4.4}
				 (g^k (v_n (t)), \psi) =  (g^k (v_n (0)), \psi)  + V (t) 
		          \end{equation}
			$$
				+  \beta \int_0^t ((g^l  D g^k ) (v_n (s)), \psi) \, dw^l (s),
			$$
		  where 
		$
		  V(t), t \geq 0
		$ 
		is a process of locally bounded variation.

		Next,  by  the 
		  integration by parts formula for semimartingales we have
		\begin{equation}
				      \label{2.4.6}
			I^{(3)}_{ n } (t)
						=  - \alpha (g^k (v_n (t)), \psi) \, \delta w^k_n (t)
															 + I^{(3, 1)}_n (t) + I^{(3, 2)}_n (t),
		\end{equation}
		where
		 $$
			I^{(3, 1)}_n (t) =  \alpha \int_0^t \delta w^k_n (s) \, d(g^k (v_n (s)), \psi), 
		 $$
		   $$
			 I^{(3, 2)}_n (t) =  \alpha <  (g^k (v_n (\cdot)), \psi), w^k (\cdot) > (t).
		   $$
		By \eqref{2.4.4}  we get
		\begin{equation}
			\label{2.4.5}
					I^{(3, 2)}_n (t) = \alpha \beta \int_0^t ((g^k D g^k) (v_n (s)), \psi) \, ds.
		\end{equation}
		In the sequel we omit testing the equations with $\psi \in C^{\infty}_0 (\bR^d)$.

		Again, by It\^o's formula we get 
		$$
			I^{(3, 1)}_n (t) = \sum_{q = 1}^6 I^{(3, 1, q)}_n (t),
		$$
		where
		 $$
			I^{(3, 1, 1)}_n (t) = \alpha \int_0^t L v_n (s) D g^k (v_n (s)) \delta w^k_n (s) \, ds,
		 $$
		  $$
			I^{(3, 1, 2)}_n (t) = \alpha \int_0^t f (v_n, s) D g^k (v_n (s)) \delta w^k_n (s) \, ds,
		  $$
		   $$
			I^{(3, 1, 3)}_n (t) = \alpha^2 \int_0^t (g^l D g^k) (v_n (s)) \delta w^k_n (s) D w^l_n (s) \, ds,
		   $$
		    $$
			I^{(3, 1, 4)}_n (t)  = - \alpha (\alpha^2/2 + \alpha \beta) \int_0^t  (g^l D g^l D g^k ) (v_n (s)) \delta w^k_n (s) \, ds, 
		    $$
		     $$
			I^{(3, 1, 5)}_n (t) = \alpha \beta^2/2 \int_0^t ( (g^l)^2 D^2 g^k ) (v_n (s)) \, \delta w^k_n (s) \, ds,
		     $$
		     $$
			I^{(3, 1, 6)}_n (t) = \alpha \beta \int_0^t (g^l D g^k) (v_n (s)) \delta w^k_n (s) \, dw^l (s).
		     $$

		Next, observe that
		$$
			\delta w^k_n (s) Dw^l_n (s) =  D s^{k l}_n (s) + \delta_{k l}/2,
		$$
		and, hence,
		$$
			I^{(5)}_n (t) + I^{(3, 2)}_n (t) + I^{(3, 1, 3)}_n (t)   
		$$
		 $$
			= R_n (t) : = \alpha^2 \int_0^t (g^l D g^k) (v_n (s)) D s^{k l}_n (s) \, ds.
		 $$

			\textit{Step 2.}
		In turns out that all  integrals from Step 1 
		that we obtained after integration by parts
		 are 'under control' except $R_n (t)$.
		To handle this term we rewrite it as follows:
		  $$
			R_n (t) =  \sum_{q = 1}^3 R^{(q)}_n (t),
		    $$ 
		where
		$$
			R^{(1)}_n (t) =\alpha^2   \int_0^t  [(g^l D g^k) (v_n (s))  -  (g^l D g^k) (v (s))]  \, D s^{k l}_n (s) \, ds,
		$$
		 $$
			R^{(2)}_n (t) = \alpha^2 \int_0^t  [(g^l D g^k) (v (s)) -  h^{k l} (v (s))]  \, D s^{k l}_n (s) \, ds,
		 $$
		  $$
			R^{(3)}_n (t) = \alpha^2 \int_0^t    h^{k l} (v (s))   \, D s^{k l}_n (s) \, ds.
		  $$
		By the way, in the proof of Theorem \ref{theorem 2.3}
		the function $h^{k l}$ will be a suitable approximation of $g^l D g^k$. 

		Next, we integrate by parts in 
		$R^{(3)}_n (t)$, 
		and, since $s^{k l}_n$ has a locally bounded variation, 
		there is no mutual quadratic variation term.
		Then, we get
		\begin{equation}
			\label{2.4.7}
		    R^{(3)}_n (t) = 
							\alpha^2  h^{k l} (v (t))  s^{k l}_n (t)   +  R^{(3, 1)}_n (t),
		\end{equation}
		  $$
			R^{(3, 1)}_n (t)  : =   - \alpha^2 \int_0^t   s^{k l}_n (s) \, dh^{k l} (v (s)).
		  $$
		Applying It\^o's formula, we obtain
		$$
			R^{(3, 1)}_n (t) = \sum_{ q = 1}^4 R^{(3, 1, q)}_n (t),
		$$
		where
		  $$
			R^{(3, 1, 1)}_n (t) =  -\alpha^2 \int_0^t     L v (s)   (D h^{k l}) (v (s))  s^{k l}_n (s) \, ds,
		 $$
		  $$
			R^{(3, 1, 2)}_n (t) = - \alpha^2  \int_0^t      f (v, s)   (D h^{k l} ) (v (s))    s^{k l}_n (s)\, ds,
		  $$
		   $$
			R^{(3, 1, 3)}_n (t) =   -\alpha^2 (\alpha + \beta) \int_0^t ( g^r D h^{k l} ) (v (s)) \, s^{k l}_n (s) \, dw^r (s),
		   $$
		    $$
			R^{(3, 1, 4)}_n (t) =  -\alpha^2 (\alpha + \beta)^2/2 \int_0^t ( (g^r)^2 D^2 h^{k l}) (v (s)) \, s^{k l}_n (s) \, ds.
		    $$

		\textit{Step 3.}
		Next, we represent the sum of all the terms involving the operator $L$
		as integral of $L \bar v_n$ plus some error terms:
		$$
			I^{(1)}_n (t) + I^{(3, 1, 1)}_n (t) + R^{(3, 1, 1)}_n (t) 
		$$
		  $$
			= \int_0^t (L \bar v_n (s) - \alpha F_{1, n} (s) + \alpha^2 F_{2, n} (s)) \, ds.
		  $$

		We show how the rest of the terms on the right hand side of \eqref{2.4.1} 
		relate to the ones that appeared in the proof.
		We have
		$$
			I^{(2)}_n (t) = N_3 \int_0^t (F_{3, n} (s), \psi) \, ds,
		$$ 
		 $$
			 I^{(3, 1, 2)}_n (t) = N_4 \int_0^t (F_{4, n} (s), \psi) \, ds,
		 $$
		  $$
			 R^{(3, 1, 2)}_n (t) = N_5 \int_0^t (F_{5, n} (s), \psi) \, ds,
		  $$
		   $$
			 I^{(3, 1, 4)}_n (t) = N_6 \int_0^t (F_{6, n} (s), \psi) \, ds,
		   $$
		    $$
			   I^{(3, 1, 5)}_n (t) = N_7 \int_0^t (F_{7, n} (s), \psi) \, ds,
		    $$
		     $$
			  R^{(1)}_n (t) =  N_8 \int_0^t (F_{8, n} (s), \psi) \, ds,
		     $$ 
		      $$
			     R^{(2)}_n (t) =  N_9 \int_0^t (F_{9, n} (s), \psi) \, ds,
		      $$
		       $$
			      R^{(3, 1, 4)}_n (t) = N_{10} \int_0^t (F_{10, n} (s), \psi) \, ds, 
		       $$
			$$
				  I^{(4)}_n (t) = N_{11} \int_0^t (H^r_{1, n} (s), \psi) \, dw^r (s),
			$$
			 $$
				 I^{(3, 1, 6)}_n  (t) = N_{12} \int_0^t (H^r_{2, n} (s), \psi) \, dw^r (s),
			 $$
			  $$
				  R^{(3, 1, 3)}_n  =  N_{13} \int_0^t (H^r_{3, n} (s), \psi) \, dw^r (s).
			  $$
		Note that the terms
		 $
			I^{(3, 1, 3)}_n (t),	
						    I^{(3, 2)}_n (t),
			 							I^{(5)}_n (t)$ 
		were not lost,
		but absorbed into the term $R_n (t)$.
				       \end{proof}

		\begin{lemma}
			\label{lemma 3.4}
		Let $\alpha$ and  $\tilde \alpha$ be numbers
		 such that 
		  $
			 0 < \alpha < \tilde \alpha < 1
		$,
		 and let  $X$ be a Banach space.
		For
		 $\theta \in (0, 1)$,
		   and $t > 0$, we denote
		   $
			V^{\theta}_t  = C^{\theta} ([0, t], X)
		   $.
		    Then, for any 
			$
			  f \in V^{\tilde \alpha}_T
			 $,
			 the function 
			$
			  t \to || f ||_{ V^{\alpha}_t  }
			  $ 
			    is continuous on $[0, T]$.	
		\end{lemma}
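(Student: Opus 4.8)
The plan is to split the norm as
$\|f\|_{V^{\theta}_t}=\sup_{0\le s\le t}\|f(s)\|_X+[f]_{\theta,t}$, where
$[f]_{\theta,t}:=\sup_{0\le s<r\le t}\|f(r)-f(s)\|_X/(r-s)^{\theta}$. The first summand is the running supremum of the continuous scalar function $s\mapsto\|f(s)\|_X$, hence continuous in $t$, so it suffices to prove that $t\mapsto[f]_{\alpha,t}$ is continuous on $[0,T]$. This function is clearly nondecreasing, and from $\|f(r)-f(s)\|_X/(r-s)^{\alpha}=\bigl(\|f(r)-f(s)\|_X/(r-s)^{\tilde\alpha}\bigr)(r-s)^{\tilde\alpha-\alpha}\le [f]_{\tilde\alpha,T}\,T^{\tilde\alpha-\alpha}$ it is bounded; hence one-sided limits $[f]_{\alpha,t_0^{\pm}}$ exist at every $t_0$, with $[f]_{\alpha,t_0^-}\le[f]_{\alpha,t_0}\le[f]_{\alpha,t_0^+}$, and I only have to rule out jumps.

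\textbf{Left continuity.} Fix $t_0\in(0,T]$ and $\varepsilon>0$, and choose $s_0<r_0\le t_0$ with $\|f(r_0)-f(s_0)\|_X/(r_0-s_0)^{\alpha}>[f]_{\alpha,t_0}-\varepsilon$. For $t\in(s_0,t_0)$ put $r_t:=\min(r_0,t)$; then $s_0<r_t\le t$, and $r_t\to r_0$ as $t\uparrow t_0$ (it equals $r_0$ once $t\ge r_0$, and equals $t$ when $r_0=t_0$). Since $[f]_{\alpha,t}\ge\|f(r_t)-f(s_0)\|_X/(r_t-s_0)^{\alpha}$ and, by continuity of $f$, the right-hand side tends to $\|f(r_0)-f(s_0)\|_X/(r_0-s_0)^{\alpha}>[f]_{\alpha,t_0}-\varepsilon$, letting $t\uparrow t_0$ and then $\varepsilon\downarrow0$ gives $[f]_{\alpha,t_0^-}\ge[f]_{\alpha,t_0}$. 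Note that this step uses only continuity of $f$.

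\textbf{Right continuity (the main step).} Fix $t_0\in[0,T)$ and suppose, for contradiction, $[f]_{\alpha,t_0^+}=[f]_{\alpha,t_0}+3c$ with $c>0$. Take $t_j\downarrow t_0$; for large $j$ one has $[f]_{\alpha,t_j}\ge[f]_{\alpha,t_0}+2c$, so choose $s_j<r_j\le t_j$ with $\|f(r_j)-f(s_j)\|_X/(r_j-s_j)^{\alpha}>[f]_{\alpha,t_0}+c$. Necessarily $r_j>t_0$, since otherwise $(s_j,r_j)$ would be an admissible pair for $[f]_{\alpha,t_0}$ and violate this bound; hence $t_0<r_j\le t_j$ and $r_j\to t_0$. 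Put $\delta_j:=r_j-s_j$. If $\liminf_j\delta_j=0$, then along a subsequence $\delta_j\to0$ and $\|f(r_j)-f(s_j)\|_X/(r_j-s_j)^{\alpha}\le[f]_{\tilde\alpha,T}\,\delta_j^{\tilde\alpha-\alpha}\to0$, contradicting the lower bound $c>0$. If $\liminf_j\delta_j=2\delta>0$, then for large $j$ we have $0\le s_j\le r_j-\delta$, so along a subsequence $s_j\to s^{*}\in[0,t_0-\delta]$; by continuity of $f$, $\|f(r_j)-f(s_j)\|_X/(r_j-s_j)^{\alpha}\to\|f(t_0)-f(s^{*})\|_X/(t_0-s^{*})^{\alpha}$, and since $(s^{*},t_0)$ is admissible for $[f]_{\alpha,t_0}$ this limit is $\le[f]_{\alpha,t_0}$, again contradicting the bound $[f]_{\alpha,t_0}+c$. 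Both cases being impossible, $[f]_{\alpha,t_0^+}=[f]_{\alpha,t_0}$, and continuity follows.

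\textbf{Main obstacle.} I expect the only genuine difficulty to be the right-continuity argument, precisely the case $r_j-s_j\to0$: without the extra Hölder exponent the ratio $\|f(r_j)-f(s_j)\|_X/(r_j-s_j)^{\alpha}$ can stay bounded away from zero and the statement actually fails (for instance $f(s)=(s-t_0)_+^{\alpha}$ makes $t\mapsto[f]_{\alpha,t}$ jump at $t_0$), so the interpolation $(r-s)^{\alpha}=(r-s)^{\tilde\alpha}(r-s)^{-(\tilde\alpha-\alpha)}$ together with $[f]_{\tilde\alpha,T}<\infty$ is the crux of the matter. The remainder is a routine compactness-and-continuity argument, the only minor care being that the Hölder seminorm need not be attained, which is why one works with near-maximizers and passes to the limit.
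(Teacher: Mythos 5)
Your proof is correct and follows essentially the same route as the paper: decompose the $C^{\alpha}$ norm into the sup-norm part and the Hölder seminorm, observe both pieces are nondecreasing in $t$, and rule out jumps by exploiting that the ratio $\|f(r)-f(s)\|_X/(r-s)^{\alpha}\le [f]_{\tilde\alpha,T}(r-s)^{\tilde\alpha-\alpha}$ vanishes as the pair approaches the diagonal (equivalently, the two-variable difference quotient extends continuously to the diagonal). The paper states this more compactly -- it notes the difference quotient $h(\xi,\nu)$ is continuous on the closed square and appeals to continuity of the running supremum of a continuous function -- whereas you unpack the same idea via an explicit sequential-compactness argument on near-maximizers; the content is identical.
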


		\begin{proof}
				Denote
		$$
			h (\xi, \nu) : = \frac{||f (\xi) - f(\nu)||_X}{|\xi - \nu|^{\alpha}}, \xi \neq \nu,
				\quad h (\xi, \xi) : = 0,
		$$
		 $$
			 x (t): = \sup_{\xi \in [0, t] } ||f (\xi)||_X, 
		 $$
		  $$
			y (t) : = \sup_{  \xi, \nu \in [0, t] } h (\xi, \nu), t \in [0, T].
		  $$
		   Take any numbers $0 < s < t < T$ and write
		\begin{equation}
			\label{3.4.2}
			||| f ||_{ V^{\alpha}_t } - || f ||_{ V^{\alpha}_s }|
				  									\leq 	| x (t) -  x (s)| + | y (t) - y (s)|.		
		 \end{equation}
		Hence, it suffices to show that both $x(\cdot)$ and $y(\cdot)$ are continuous functions on $[0, T]$.
		
		Since
		  $
		   f \in C^{\tilde \alpha} ([0, T], X)
		   $, 
		for any
		  $\xi, \nu \in [0, T]$
		such that $\xi \neq \nu$,
		we have
		\begin{equation}
			\label{3.4.1}
			| h (\xi, \nu)|
						 \leq || f ||_{ V^{\tilde \alpha} (T) } |\xi - \nu|^{\tilde \alpha - \alpha}.
		\end{equation}
		Hence,
		$h (\xi, \nu)$ is continuous on $[0, T] \times [0, T]$.
		Observe that both functions $x (\cdot)$ and $y(\cdot)$ are nondecreasing on $[0, T]$
		and, hence, at worst they have  countably many jump discontinuities.
		Then, since both $h (\cdot, \cdot)$ and $||f  (\cdot) ||_{X}$ are continuous on their domains,
		it follows that $x$ and $y$ are continuous functions on $[0, T]$.
		\end{proof}

		  \mysection{Proof of the main results}
			\textit{Proof of Theorem \ref{theorem 2.3}.}
			It is  assumed that
			 $
				i, j \in \{1, \ldots, d\}
			$, and 
			 $
			    k, l, r \in \{1, \ldots, m\}
			$,
			and  the summation with respect to these indexes is omitted.

			We fix arbitrary $\varepsilon \in (0, 1)$,
		 	 and denote by $N$ a constant independent of $\varepsilon$ and $n$.
			As before, $N$ might change from  inequality to inequality.

			Take any $R > 0$ and denote
			  $$
				\eta_n (\varepsilon): = 
				\inf\{ t \geq 0:
				 	 \sum_{k = 1}^m || \delta w^k_n  ||_{ C^{ \gamma/2 - 1/p } [0, t] } 
					+ \sum_{k, l = 1}^m ||s^{k l}_n ||_{ C^{ \gamma/2 - 1/p } [0, t] }  \geq \varepsilon \},
			  $$
			   $$
				 \gamma_n := \inf\{ t \geq 0: || v_n - v ||_{ \cV (t) } \geq 1 \}, 
			   $$
			    $$	
				\sigma_n (R) := \inf\{ t \geq 0: || v ||_{ \cV (t) } + \sum_{k, l = 1}^m \int_0^t | D s^{k l}_n (s) |^p \, ds \geq R \},  
			    $$
			       $$
				   \tau_n (\varepsilon, R) := \eta_n (\varepsilon) \wedge \gamma_n \wedge \sigma_n (R) \wedge T.
			       $$
				In Steps 1 - 5, we omit the dependence of stopping times on $\varepsilon$ and $R$.

		   		By  Remark \ref{remark 1.6},
				for any
				  $
				     \tilde \gamma \in (\gamma, \mu),
				  $
				we have
				 $
					v, v_n    \in    C^{ \tilde \gamma/2 - 1/p}  ([0, T], H^{2 -  \mu}_p (\bR^d)),
				$
				  for any $\omega$.
				 Hence, 
				by Lemma \ref{lemma 3.4}
				the process
				 $ 
				   || z ||_{ \cV (t)  }, t \in [0, T]
				$ 
				has  continuous  paths, 
				 for
				 $z \in  \{v, v_n - v\}$.
				Then, $\sigma_n$ and $\gamma_n$ are well-defined stopping times,
				and, for any $\omega$, 	
				\begin{equation}
					\label{2.5.2}
					|| v_n    -  v   ||_{ \cV (\gamma_n) } = 1, \quad
			    												 || v_n  - v ||_{  \cV (\tau_n) } \leq 1,
																							\quad || v_n ||_{ \cV (\tau_n) } \leq N.
				\end{equation}

			It follows from the definition of $\tau_n$ and
			Remark \ref{remark 1.6}
			 that, for any
			 $\omega \in \Omega$,
			 we have
			 \begin{equation}
				\label{2.5.1}
			     \sup_{s \leq \tau_n} ||z (s, \cdot) ||_{X} \leq N,
			  \end{equation}
			  where 
			  $
				 X = C^1 (\bR^d)
			  $
			 or 
			$
			   X  = H^1_p (\bR^d),
			$
			and
			 $
			    z \in \{v, v  - v_n, v_n\}.
			 $

			For any $k, l$,
			we set 
			$
			   h^{k l}_{\varepsilon}
			$
			 to be an approximation of
			$
			  g^l D g^k
			$ 
			given by \eqref{3.1.1}
			with
			 $
			   \varepsilon^{1/2}
			$
			 in place of $\varepsilon$.
			Observe that  if  $(A4a)$ holds, we have 
			$$
				h^{k l}_{\varepsilon} (x) = c_k c_l x.
			$$

			We state some properties of
			  $
			     h^{k l}_{\varepsilon}
			  $ 
			  that will be used below.
			In case when $(A4) (a)$ holds, 
			 all of  these inequalities  follow directly from the last  equality.
			Otherwise, one obtains them by using Lemma \ref{lemma 3.1}.
			By what was just said, for any $x \in \bR, k, l$, we have
			 \begin{equation}
					\label{2.5.3}
			      |(g^l D g^k - h^{k l}_{\varepsilon}) (x)| \leq           
						    					N \varepsilon^{ 1/2} |x|;
			 \end{equation}
			  \begin{equation}
					\label{2.5.4}
							| h^{k l}_{\varepsilon} (x) | \leq N |x|;
			  \end{equation}	
			 \begin{equation}
					  \label{2.5.5} 
				         || D^{q + 1 } h^{k l }_{\varepsilon} ||_{ C^{\theta} } \leq N \varepsilon^{ - q/2 },  \, \,   q  = \{0, 1, 2, \ldots \}.
			  \end{equation}

		Next, to prove the assertion of this theorem we  apply  Lemma \ref{lemma 2.4}
		with
		 $
			h^{k l} = h^{k l}_{\varepsilon}
		$ 
		and then  estimate the terms that appear in the lemma.
		Let  $\bar v_n$ be the function defined in Lemma \ref{lemma 2.4}. 
		Observe that all the conditions of  this lemma hold.
		 Then, by  the a priori estimate of Theorem 5.1 of \cite{Kr_99} with $n = 0$,
		$$
			f (t, x) = \sum_{q = 1}^{10} F_{q, n} (t, x), \quad
												   g^r (t, x) = \sum_{q = 1}^3 H^r_{q, n} (t, x),
		$$
		  we have
				\begin{equation}
				   \label{2.5.6.0}
					||  \bar v_n ||^p_{\mathcal{H}^{2}_p (\tau_n)}
														 \leq N \sum_{ q  = 1}^{13}  I_{q, n},
				\end{equation}				
		where
			$$
				I_{1, n}  =  E \int_0^{\tau_n}  || (D^2 g^k) (v_n (s, \cdot))  M v_n (s, \cdot)||_p^p  \,  |\delta w^k_n (s)|^p \, ds,
			$$
			  $$
				I_{2, n}  = E  \int_0^{\tau_n} || (D^2 h^{k l}_{\varepsilon}) (v (s, \cdot)) M v (s, \cdot)||_p^p  \, |s^{k  l}_n (s)|^p \, ds,
			  $$			  
			   $$
				 I_{3, n}  =    E \int_0^{\tau_n} || f (v_n, s, \cdot) -  f (v, s, \cdot)||_p^p \, ds,  
			   $$
			     $$
				  I_{4, n} = E \int_0^{\tau_n} || f(v_n, s, \cdot)  (D g^k) (v_n (s, \cdot)) \,  |\delta w^k_n (s)|^p \, ds, 
			     $$
			      $$
				  I_{5, n}  =   E \int_0^{\tau_n} || f (v, s, \cdot)  (D h^{k  l}_{\varepsilon}) (v (s, \cdot))||_p^p  \, |s^{k  l}_n (s)|^p \, ds, 
			      $$
			        $$
				    I_{6, n}  =  E \int_0^{\tau_n} || (g^l D g^l  D g^k) (v_n (s, \cdot))||^p_p \, |\delta w^k_n (s)|^p \,  ds, 
			        $$
			             $$
				            I_{7, n}  =   E  \int_0^{\tau_n}  || ( (g^l)^2 D^2 g^k ) (v_n (s, \cdot))||_p^p  \, |\delta w^k_n (s)|^p \, ds,
			             $$
				    $$
					  I_{8,  n}  = E \int_0^{\tau_n} || (g^l D g^k ) (v_n (s, \cdot))  -  (g^l D g^k) (v (s, \cdot))||_p^p   \, |D s^{k l}_n (s)|^p \, ds,
				    $$
			                $$
				               I_{9, n}  =   E  \int_0^{\tau_n}  || (g^l D g^k) (v (s, \cdot)) -  h^{k l}_{\varepsilon} (v (s, \cdot)) ||_p^p   \, |D s^{k l}_n (s)|^p \, ds,
				     $$
				        $$
						I_{10, n}  =  E \int_0^{\tau_n} || ((g^r)^2 D^2 h^{k l}_{\varepsilon} ) (v (s, \cdot))||^p_p \, |s^{k l}_n (s)|^p \, ds,
				        $$
				         $$
					       I_{11, n} =   E \int_0^{\tau_n}   || g^r (v_n (s, \cdot)) - g^r (v (s, \cdot)) ||^p_{1, p} \, ds,
				         $$
				          $$
					      I_{12, n}  = E \int_0^{\tau_n} || (g^r D g^k ) (v_n (s, \cdot))||^p_{1, p} \,  |\delta w^k_n (s)|^p \, ds,
				          $$	 
				           $$
					       I_{13, n}  =   E \int_0^{\tau_n}   || (g^r D h^{k l}_{\varepsilon} ) (v (s, \cdot))||^p_{1, p} \,  |s^{k l}_n (s)|^p \, ds,
				           $$
				and the operator $M$ is given by \eqref{3.0}.			

			\textit{Step 1.}	
			 We estimate the terms $I_{6, n} - I_{10, n}$.
			First, by $(A4a)$ or $(A4b)$
			  we have
			$$
				I_{6, n} + I_{7, n} \leq N E \int_0^{\tau_n} (|| v_n (s, \cdot) ||_p^p + || v_n (s, \cdot) ||_{2p}^{2p})  \, |\delta  w^k_n (s)|^p \, ds.
			$$
			Using \eqref{2.5.1}, we obtain
			\begin{equation}
					\label{2.5.7}
				I_{6, n} + I_{7, n}   \leq N \varepsilon^p.
			\end{equation}
			Similarly,
			\begin{equation}
					\label{2.5.6}
				 I_{ 8, n } \leq N E \int_0^{\tau_n} || v_n (s, \cdot) - v (s, \cdot) ||_p^p \, |D s^{k l}_n (s)|^p \,  ds,
	 		\end{equation}
		  	 \begin{equation}
					\label{2.5.8}
				I_{10, n} \leq N \varepsilon^{p/2},
			 \end{equation}
		where in the last inequality we also used \eqref{2.5.5}.
		
			Next, by \eqref{2.5.3} and \eqref{2.5.1} 
			we get
			\begin{equation}
				\label{2.5.9}
				\begin{aligned}
				I_{9, n} & \leq  
						 N  \varepsilon^{ p/2} 
					  						E \int_0^{\tau_n} || v (s, \cdot) ||_p^p \,  |D s^{k l}_n (s) |^p\, ds \\
						&\leq N \varepsilon^{ p/2}
											 E \int_0^{\sigma_n} |D s^{k l}_n (s) |^p\, ds \leq N \varepsilon^{ p/2}.
				\end{aligned}
			 \end{equation}

				\textit{Step 2.}
				We move on to the terms $I_{3, n} - I_{5, n}$.
				 By  $(A3) (p)$ we obtain
				\begin{equation}
						   \label{2.5.10}
					I_{3, n} \leq N  \,  E \int_0^{\tau_n}  || v_n (s, \cdot) - v (s, \cdot) ||^p_{1, p} \, ds. 
			           \end{equation}
			Similarly, by splitting
			 $
			    f(u, t, x)
			$
			  into
			$
			  f(u, t, x) - f (0, t, x)
			 $ 
			and 
			$
			  f(0, t, x)
			$
			 and using $(A3) (p)$,
			 we get
			\begin{equation}
					\label{2.5.11}
				I_{4, n} + I_{5, n} \leq
									 N  \varepsilon^p  \,  E \int_0^{\tau_n} (|| v (s, \cdot)||^p_{1, p} + || v_n (s, \cdot) ||^p_{1, p} + || f (0, s, \cdot) ||_p^p  ) \, ds
				  \leq  N  \varepsilon^p.
			\end{equation}	 
			Note that in the last inequality we again used \eqref{2.5.1}.

			 \textit{Step 3.} We estimate $I_{1, n}$  and $ I_{2, n}$. 
			First, by \eqref{2.5.5} we have
			        $$
				              I_{1, n} + I_{2, n} 
									\leq N \varepsilon^{  p/2}  E \int_0^{ \tau_n }  (||  M v_n (s, \cdot) ||^{p}_{ p} + || M v (s, \cdot) ||^{p}_p) \, ds.
				$$
			           Next,  by Cauchy-Schwartz inequality, for any $\omega, s$, 
				 $$
					    ||  M v_n (s, \cdot) ||^{p}_{ p}  \leq N || v_n (s, \cdot) ||^{2p}_{1, 2p}.
                                 $$         
				 Using the embedding theorem for $H^{\gamma}_p$ spaces
				 (see, for instance, Theorem 13.8.7  in \cite{Kr_08}),
				 we get, for $u \in \{ v, v_n\}$,
				$$
					|| u ||_{1, 2p} 
								\leq N || u ||_{1 + d/(2p), p}
														\leq N || u ||_{2 - \mu, p},
				$$
				where the last inequality holds, 
				 since 
				$
				 d/(2p) + \mu < 1.
				$
				By the above and \eqref{2.5.2} we get
				\begin{equation}
						     \label{2.5.12}
					  I_{1, n} + I_{2, n} \leq  N \varepsilon^{ p/2}.
				\end{equation}

		   \textit{Step 4}. We deal with $I_{11, n} - I_{13, n}$.
			The term $I_{11, n}$  will be estimated by Lemma \ref{lemma 4.2}.
			Let us check its assumptions.
			In the notations of this lemma, 
			for  
			$\omega \in \Omega$, 
			 $s \in [0, \tau_n]$,
			$ x \in \bR^d$,  we put 
			   $$
			       u (x) = v (s, x), 
			    				   \quad v (x) = v_n (s, x),
			     									  \quad 	g (x) = g^k (x). 	
			   $$
			Recall that by Remark \ref{remark 1.6}
			 we have
			 $
				v (s, \cdot), v_n (s, \cdot) \in H^1_p (\bR^d),
			$
			 for all
			 $\omega$, 
			$
			  s \in [0, \tau_n].
			$
			Also the condition \eqref{4.2.1} holds 
			due to \eqref{2.5.1}.
			Then, by Lemma \ref{lemma 4.2}
			\begin{equation}
					\label{2.5.13}
				I_{11, n} \leq N E \int_0^{\tau_n} || v (s, \cdot) - v_n (s, \cdot) ||^p_{1, p} \, ds. 
			\end{equation}

			Next, to handle the terms $I_{12, n}$ and   $I_{13, n}$ 
			we use Lemma \ref{lemma 4.1}. 
			Note that 
			$g^k (0) = 0$, 
			for each $k$, so that this lemma is applicable.
			 Then, by \eqref{2.5.5} and \eqref{2.5.1} we have 
			\begin{equation}
					\label{2.5.14}
				 I_{12, n} + I_{13, n} 
						      \leq N \varepsilon^{ p/2 } 
									\, E \int_0^{\tau_n}  (|| v_n (s, \cdot) ||^p_{1, p} + || v (s, \cdot) ||^p_{1, p}) \, ds
									\leq   N \varepsilon^{ p/2 }.
			\end{equation}

		\textit{Step 5.}
		 We combine all the  estimates 
		\eqref{2.5.6.0} - \eqref{2.5.14} and obtain that
		\begin{equation}
				\label{2.5.15}
			|| \bar v_n ||^p_{\mathcal{H}^2_p (\tau_n) } \leq
			 N \varepsilon^{ p/2}  
			+ N  E \int_0^{\tau_n} || v_n  (s, \cdot) - v (s, \cdot) ||^p_{1, p} \, (1+ |D s^{k l}_n (s)|^p)  \, ds.
		\end{equation}
		Next, we fix any stopping time $ \tau$. 
		Clearly, in \eqref{2.5.15} we may replace
		 $\tau_n$ by 
		$
		   \tau \wedge \tau_n
		$
		and, 
		this replacement will not change the constant $N$.
		Using this combined with Remark \ref{remark 1.6},
		  we get
		 \begin{equation}	
			\label{2.5.16}
			\begin{aligned}
			  & E || v_n - v ||^p_{\cV ( \tau \wedge \tau_n) } 
			 \leq N    (J_{1, n} + J_{2, n} +   \varepsilon^{   p/2 })  \\
			   &  + N E \int_0^{ \tau \wedge \tau_n} || v_n   - v  ||^p_{  \cV (s) } \, (1 + |D s^{k l}_n (s)|^p)  \, ds,
			\end{aligned}
		   \end{equation}
		    where
		  $$
			J_{1, n} =  E  ||   g^k (v_n)  \delta  w^k_n  ||^p_{ \cV (\tau_n) }, \quad
																			J_{2, n} =    E      ||   h^{k l}_{\varepsilon} ( v  ) s^{k l}_n  ||^p_{\cV (\tau_n) }.
		   $$
		Note that we were able to replace 
		$
		   || v_n  (s, \cdot) - v (s, \cdot) ||^p_{1, p}
		$
		by
		 $
		     || v_n - v||_{\cV (s)}
		 $
		 because 
		$\mu < 1$.
		Also, recall that by Lemma \ref{lemma 3.4}
		 $
			||v_n - v||_{\cV (s)}, s \geq 0
		 $
		 is a process with continuous paths,
		and, hence, the integral on the right hand side of \eqref{2.5.16} is well-defined.
	
		 Let us estimate $J_{1, n}$ and $J_{2, n}$.
		 By the product rule inequality in H\"older spaces 
		 we get 
		$$
			J_{1, n}  \leq
						 N  \varepsilon^p  E || g^{k} (v_n) ||^{p}_{ \cV (\tau_n)}.
		$$
		    Since 
		$
		2 - \mu > 1 + d/p
		$,
		  we may apply Lemma \ref{lemma 4.3},
		 and we obtain that
		$J_{1, n}$
		 is less   than
		 $$
			N  \,  \varepsilon^p (E ||v_n||^{ p }_{ \cV (\tau_n) }  
			 																  + E ||v_n||^{(2 + \theta) p}_{  \cV (\tau_n)}).
		 $$
		  By  \eqref{2.5.2}
		  we may replace the terms involving  $v_n$ by $N$. 
		  Using the same argument and \eqref{2.5.5},
		 we obtain 
		$$
			J_{2, n} \leq N \varepsilon^p  || D h^{k l}_{\varepsilon} ||^p_{ C^{ 1 + \theta} } 
																\leq N \varepsilon^{ p/2 }.
		$$
		We point out that Lemma \ref{lemma 4.3} is applicable because
		 $h^{k l}_{\varepsilon} (0) = 0 = g^k (0)$.

		 It follows from the last paragraph  and \eqref{2.5.16} that
		 \begin{equation}
					\label{2.5.17}
		   \begin{aligned}
		E x ( \tau ) \leq
							    N \varepsilon^{  p/2}  
													+ N E \int_0^{  \tau  } x (s) \, (1 + |D s^{k l}_n (s)|^p) I_{s \leq \tau_n} \, ds,	
		  \end{aligned}
		 \end{equation}
		where
		  $$
			x (s) := ||v_n - v  ||^p_{ \cV (s \wedge \tau_n) },
		  $$
		and $N$ depends on $T$.
		Then,
		 by a stochastic variant of Gronwall's inequality 
		 (see, for instance, Lemma 4 in \cite{M_85}),
		 we obtain
		\begin{equation}
					\label{2.5.18}
				E ||  v_n - v ||^p_{\cV (\tau_n) } \leq
											 N \varepsilon^{ p/2}
		\end{equation}
		because the integral
		 $
		  	 \int_0^{\tau_n} |D s^{k l}_n (s)|^p \,  ds
		 $ 
		is bounded by $R$, for any $\omega$.
 
		Next, let
		 $
		   A_n  = \{ \gamma_n <  \eta_n \wedge  \sigma_n \wedge T \}.
		 $
		Then, by \eqref{2.5.2}  we have
		$$
			P ( A_n) 
													=    E || v_n  - v ||^p_{ \cV (\tau_n) } I_{ A_n }
																					       \leq E || v_n  - v ||^p_{ \cV (\tau_n) }.
		  $$
		Therefore, by \eqref{2.5.18}
		 \begin{equation}
					\label{2.5.19}
		  P ( \gamma_n <  \eta_n \wedge \sigma_n  \wedge T) \leq   N \varepsilon^{ p/2}.
		\end{equation}
		
	  	\textit{Step 6.}
		 Finally, we prove the convergence in probability.
		By Remark \ref{remark 1.6},
		  for any $\varepsilon > 0$,
		one can choose $R >0$ such that
		$$
			P ( || v ||_{\cV (T) } \geq R) < \varepsilon.
		$$ 
		Then, by what was just said and  $(A6) (p) (iii)$,
		 for any $\varepsilon > 0$, there exists $R > 0$ such that
		\begin{equation}
			\label{2.5.20}
			\sup_n P (\sigma_n (R) < T) < \varepsilon.
		\end{equation}

		Next,
		 for any $\delta > 0$, we have
		\begin{equation}
			\label{2.5.21}
			P (|| v_n - v||_{ \cV (T)} \geq \delta) \leq 
												P (|| v_n - v||_{ \cV (\tau_n) } \geq \delta)
		\end{equation}
		  $$
				+ P ( \gamma_n < \eta_n (\varepsilon) \wedge \sigma_n (R) \wedge T)
				 + P (\eta_n (\varepsilon) < T) 
				  + P (\sigma_n (R) < T).
		  $$
		Using Chebyshov's inequality and \eqref{2.5.18},
		we get that the first term on the right hand side of \eqref{2.5.21} 
		is bounded by 
		$
		   N (\sqrt{\varepsilon}/\delta)^p.
		$	
		This combined with
		\eqref{2.5.19}, 
		$(A6) (p) (ii)$ and \eqref{2.5.20} yields
		$$
			\nlimsup_{n \to \infty} P (|| v_n - v||_{ \cV (T)} \geq \delta)
														 \leq N (\varepsilon +  (1 + \delta^{-p}) \varepsilon^{ p/2}),
		$$
		for any $\delta, \varepsilon > 0$.
		This implies the claim.

		\textit{Proof of Theorem \ref{theorem 2.1}.}
		 We set
		 $\alpha = 1, \beta  = 0$
		 and  
		let  $v$ and $v_n$ be the unique solutions of class $\cH^2_p (T)$
		of the  equations \eqref{2.1} and \eqref{2.2} respectively.
		Clearly, we have
		$v = u$ 
		and
		  $v_n = u_n$
		as elements of 
		$\mathcal{H}^2_p (T)$.
		Then, the result follows directly from Theorem \ref{theorem 2.3}.

		\textit{Proof of Theorem \ref{theorem 2.2}.}
		\textit{Step 1.} 
		First, we prove the inclusion 
		$
		  \fR_{cl} \subset P \circ u^{-1}|_{\cV (T) }.
		$
		We use the argument from \cite{M_86}, \cite{MS}.
		Let $w^k_n$ 
		be any approximation of $w^k, k \geq 1$ 
		that satisfies $(A6) (p)$.
		Fix some 
		$h \in \cH (T).$
		In the equation \eqref{2.1} and \eqref{2.2}
		 we set 
		$\alpha = -1$, 
		  			$\beta  = 1$, 
		and we replace 
		$f(z, t, x)$
		    by 
		$$
			\bar f (z, t, x): =  f (z, t, x)   + \sum_{ k = 1}^m g^k (z (x)) D h^k (t) - 1/2 \, \sum_{k = 1}^m (g^k D g^k) (z (x)), \, z \in H^1_p (\bR^d).
		$$
		Note that 
		$\bar f(z, t, x)$
		 satisfies the assumption $(A3) (p)$.
		Let 
		$
		  v \in W^{1, 2}_p (T)
		$
		 and
		 $
		   v_n \in \cH^2_p (T)
		  $
		  be the unique solutions  
		of the equations \eqref{2.1} and \eqref{2.2} respectively
		 (see Remark \ref{remark 1.7}).
		Observe that by uniqueness 
		$v = \cR (h)$ 
		in $W^{1, 2}_p (T)$,
		 and 
		$v_n$ satisfies
		  the following SPDE:
		\begin{align*}
		dz (t, x) =&  [a^{i j} (t, x) D_{i j} z (t, x) + f (z, t, x)\\
					       &  +  \sum_{ k = 1}^m g^k (z (t, x)) D h^k (t)  - \sum_{ k = 1}^m g^k (z (t, x)) \, D w^k_n (t)\\
			                 	       + & \sum_{ k = 1}^m g^k (z (t, x)) \, dw^k (t),  \quad
						            z (0, x) = u_0 (x).
		\end{align*}
		Then, by Theorem \ref{theorem 2.3}, 
		for any $\delta > 0$,
		and $n$ large, we have
		\begin{equation}
					\label{2.2.1}
			P ( || v_n - \cR (h) ||_{\cV(T) } < \delta)  > 0.  
		\end{equation}

		Next, we
		denote 
		$$
			   \bar w^k (t, n) :=  w^k (t)  - w^k_n (t) +  h^k (t),
		$$
		  $$
			\rho_n := \prod_{ k = 1}^m \exp (\int_0^T (D w^k_n (t) - D h^k (t))\, dw^k (t) 	
																		- 1/2 \, \int_0^T  |D w^k_n (t) - D h^k (t)|^2 \, dt).
		  $$ 
		Let $P_n$ be a measure on $(\Omega, \cF)$ given by
		   $$
			dP_n (\omega) :=  \rho_n (\omega) \, dP (\omega).
		  $$
	 By 
	$(A6) (p) (ii)$
	 and the fact that 
	$D h^k, k \geq 1$
	 are bounded functions on $[0, T]$,
	it follows that Novikov's condition is satisfied.
	Then, by Girsanov's theorem 
		$P_n$ is a probability measure
		 on 
		$(\Omega, \mathcal{F})$,
		 and,
		for any $n$,
		$
		 \{\bar w^k (t, n), t \geq 0, k = 1, \ldots, m\}
		$ 
		is a sequence of independent standard Wiener processes
		 with respect to
		 $
		  \{\mathcal{F}_t, t \leq T\}
		$ 
		on 
		$
		(\Omega, \mathcal{F}, P_n)
		$.
	
		Next, let 
		$
		   \mathcal{H}^2_p (T, n)
		$ 
		be the stochastic Banach space 
		defined on the probability space 
		$
		(\Omega, \mathcal{F}, P_n)
		$
		 with $w^k$ replaced by
		   $
		    \bar w^k_n, k = 1, \ldots, m
		   $.
		Observe that $v_n$ is the unique solution of class
		 $
		     \mathcal{H}^2_p (T, n)
		 $
		 of the following SPDE:
		\begin{equation}
			\label{2.2.3}
		    \begin{aligned}
			dz (t, x) &= [a^{i j} (t, x) D_{i j} z (t, x) + f(z, t, x)] \, dt\\
			   &+\sum_{k=1}^m g^k (z (t, x)) \, d\bar w^k (t, n),
											   	      \quad z (0, x) = u_0 (x).
		   \end{aligned}
		\end{equation}
		Claim that 
		\begin{equation}
			\label{2.2.4}
			P_n \circ v_n^{-1}|_{\cV (T)} 
									= P \circ u^{-1}|_{\cV (T) }.
		\end{equation}
		To prove this  we use our Wong-Zakai theorem. 
		By Theorem \ref{theorem 2.1}
		 one may replace the equations \eqref{1.6}  and \eqref{2.2.3}
		by their Wong-Zakai approximation schemes (see \eqref{1.7})
		 such that the regularized noise satisfies $(A6) (p)$.
		It follows from the method of continuity that each Wong-Zakai approximation is a fixed point
		of a contraction operator on $\cH^2_p (T)$  
		 (see the proof of Theorem 5.1 of \cite{Kr_99}).
		Now the claim follows from Picard iteration method in
		 $
		   \cH^2_p (T)
		 $ 
		and the embedding theorem for $\cH^2_p (T)$ 
		(see Remark \ref{remark 1.6}).

		Finally, by \eqref{2.2.1} 
		combined with the fact that $P_n$ is absolutely continuous with respect to $P$,
		and \eqref{2.2.4}
		we obtain that, for any $\delta > 0$,
		$$
			P \circ u^{-1}|_{\cV (T) } ( \{ z \in \cV (T): || z - \cR (h) ||_{ \cV (T) } < \delta \} ) > 0.
		$$
		This proves the announced inclusion.
	
		\textit{Step 2.}
		We prove the reverse inclusion. 
		Let $u_n$ be the unique solution of class
		 $\cH^2_p (T)$ 
		of \eqref{1.7}. 
		Then, by Theorem \ref{theorem 2.1} 
		and Portmanteau theorem
		$$
			P \circ u^{-1}|_{\cV (T) } (\fR_{cl}) 
										  \geq \nlimsup_{n \to \infty} P ( u_n \in   \fR_{cl}) = 1.
		$$
		This implies the assertion.			
		
		\mysection{Appendix A.}

		\begin{definition}
		    Denote
		    $
			h  = 1/n,
		    $
		$
			\varkappa (t) = -1 \vee t \wedge 1, t \in \bR.
		$
		We say that  the process
		   $
			w^k_n (t), t \geq 0
		   $
		    is the polygonal approximation of $w^k$
		if 
		\begin{equation}
				      \label{4.2}
		         w^k_n (t) = w^k (  (l-1) h ) + 1/h\, ( t -  l h ) \varkappa(w^k ( l h  ) - w^k ( (l-1)h )),
		\end{equation}
		  for 
		       $
			    t \in [l h, (l+1) h)
		      $,
		 and    
			$
			     l \in \{0, 1, 2, \ldots\}
			  $.  	
		\end{definition}

		The following lemma is similar to Proposition 6.3.1  of \cite{S_05}.
                          \begin{lemma}
		Let $p > 1, \varepsilon > 0$,
		  $
		   \theta \in (0, 1/2),
		  $
		$\theta' \in (0, \theta)$
		  be numbers.
		Assume that $w^i_n$ is  given by  \eqref{4.2}.
		Then, for any $i, j$,
		 the following assertions hold.
		
			$$
				(i) \,  E || \delta w^i_n   ||^p_{ C [0, T] }  \leq N (p, T, \varepsilon) n^{-  p/2  + \varepsilon }.
			$$

			  $$
				 (ii) \, E  || \delta w^i_n ||^p_{C^{1/2 - \theta}[0, T]} \leq N (p, T, \theta, \theta') n^{-  \theta' p}.
			  $$

 			$$
				(iii) \, E   || s^{i j}_n ||^p_{ C [0, T]} \leq N (p, T, \varepsilon) n^{-  p/2 + \varepsilon}.
			$$
			 
			$$
				(iv) \,   E  \int_0^T | D s^{i j }_n (t) |^p  \, dt \leq N (p, T).
			$$

			$$
				(v) \, E || s^{i j}_n ||^p_{ C^{1/2 - \theta} [0, T]} \leq N (p, T, \theta, \theta') n^{-  \theta' p}.
			$$

		\end{lemma}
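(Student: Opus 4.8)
The plan is to prove all five estimates for the polygonal approximation \eqref{4.2} by reducing everything to standard Gaussian/Brownian bridge computations on the dyadic-type mesh of size $h = 1/n$, together with the Kolmogorov--Chentsov continuity criterion. The key structural observation is that on each interval $[lh,(l+1)h)$ the approximation $w^k_n$ is affine, so $Dw^k_n$ is a piecewise constant process taking the value $\tfrac1h\varkappa(w^k(lh)-w^k((l-1)h))$, and the truncation $\varkappa$ only helps: $|Dw^k_n(t)|\le 1/h$ deterministically, and on the (overwhelmingly likely) event that all the increments $|w^k(lh)-w^k((l-1)h)|\le 1$ the process $w^k_n$ coincides with the usual (untruncated) polygonal interpolant. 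So I would first record that $\delta w^k_n(lh) = w^k(lh) - w^k_n(lh)$ vanishes at the mesh points up to the truncation error, and inside each interval $\delta w^k_n$ is a Brownian motion minus an affine function interpolating its endpoints, i.e. (on the good event) a Brownian bridge on an interval of length $h$.

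First I would prove $(i)$. On each $[lh,(l+1)h)$, write $\sup_{[lh,(l+1)h)}|\delta w^k_n| \le \sup |w^k(t)-w^k(lh)| + |w^k(lh)-w^k_n(lh)|$; iterating, $\sup_{[0,T]}|\delta w^k_n|\le 2\max_{l\le T/h}\sup_{t\in[lh,(l+1)h]}|w^k(t)-w^k(lh)|$ plus a truncation defect. Each $\sup_{t\in[lh,(l+1)h]}|w^k(t)-w^k(lh)|$ has the law of $\sqrt h\,\sup_{[0,1]}|w|$, which has Gaussian tails, so its $p$-th moment is $N(p)h^{p/2}$; taking the max over $\sim n$ such blocks and using a union bound / $L_q$ bound for the maximum costs at most a factor $n^{\varepsilon}$ (more precisely $(\log n)^{p/2}$, absorbed into $n^{\varepsilon}$). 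The truncation defect is bounded by $\sum_l \mathbf 1\{|w^k(lh)-w^k((l-1)h)|>1\}\cdot(\text{something }\le 2)$, whose expectation decays faster than any power of $n$ by Gaussian tails. This gives $E\|\delta w^k_n\|_{C[0,T]}^p \le N n^{-p/2+\varepsilon}$, which is $(i)$.

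Next, $(iii)$ and $(iv)$: write $s^{ij}_n(t) = \int_0^t \delta w^i_n(r)\,dw^j_n(r) - \delta_{ij}t/2$. Since $dw^j_n(r) = Dw^j_n(r)\,dr$ and $|Dw^j_n|\le 1/h$, on each block $\int_{lh}^{(l+1)h}\delta w^i_n\,dw^j_n$ is, up to truncation, $\tfrac{1}{h}(w^j((l+1)h)-w^j(lh))\int_{lh}^{(l+1)h}\delta w^i_n(r)\,dr$; the Brownian bridge structure makes $\int_{lh}^{(l+1)h}\delta w^i_n(r)\,dr$ have size $\sim h^{3/2}$ and the increment factor size $\sim h^{-1/2}$, so each block contributes $O(h)$ in $L^2$ and, when $i=j$, has conditional mean $\tfrac{\delta_{ij}}{2}h$ (this is exactly the Stratonovich correction, cf. the classical Wong--Zakai computation), which is why subtracting $\delta_{ij}t/2$ is the right centering. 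Summing the $\sim 1/h$ centered increments and using that the block contributions form a martingale difference array (with respect to $\mathcal F_{lh}$) gives, via Burkholder--Davis--Gundy, $E|s^{ij}_n(t)|^p \le N h^{p/2}\cdot(1/h)^{p/2}\cdot h^{p/2} = N h^{p/2}$ after the max-over-blocks argument, hence $(iii)$; for $(iv)$, $Ds^{ij}_n(r) = \delta w^i_n(r)Dw^j_n(r) - \delta_{ij}/2$ is bounded on each block by $\tfrac1h\sup|\delta w^i_n| + \tfrac12$, so $E\int_0^T|Ds^{ij}_n|^p \le N\sum_l h\cdot(h^{-1}\cdot h^{1/2})^p\cdot N(p) + N \le N(p,T)$ uniformly in $n$, which is $(iv)$.

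Finally, $(ii)$ and $(v)$ — the H\"older estimates — are obtained by Kolmogorov's continuity theorem applied to $\delta w^i_n$ and $s^{ij}_n$ respectively, but one must be careful since the processes depend on $n$ and one wants a rate. The clean route: for $|t-s|\ge h$ one bounds the increment of $\delta w^i_n$ (resp. $s^{ij}_n$) by the increment of the underlying Brownian object plus a piecewise-linear-interpolation error, getting $E|\delta w^i_n(t)-\delta w^i_n(s)|^p \le N|t-s|^{p/2}$; for $|t-s|<h$ one uses that on a single affine piece $|\delta w^i_n(t)-\delta w^i_n(s)| \le |w^i(t)-w^i(s)| + |t-s|/h\cdot(\text{block increment})$, and since $|t-s|<h$ the second term is $\le$ the block increment, which has size $h^{1/2}\le$ whatever power we need when we accept a loss from $\theta'<\theta<1/2$. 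Combining the two regimes and invoking Kolmogorov--Chentsov with exponent tuned so that the H\"older seminorm of order $1/2-\theta$ picks up a factor $h^{\theta'}$ — this is where $\theta'\in(0,\theta)$ enters, as the usual gap between the Kolmogorov moment exponent and the attained H\"older exponent — yields $E\|\delta w^i_n\|^p_{C^{1/2-\theta}[0,T]} \le N n^{-\theta'p}$; the argument for $s^{ij}_n$ in $(v)$ is identical, using the block estimates from the previous paragraph in place of the Brownian bridge estimates. \textbf{The main obstacle} I expect is the bookkeeping in $(v)$: controlling the H\"older increments of $s^{ij}_n$ uniformly across block boundaries while simultaneously extracting the decay rate $n^{-\theta'p}$ requires carefully separating the "same block", "adjacent blocks", and "far apart" cases and keeping track of how the truncation $\varkappa$ and the Stratonovich centering interact with each case; the actual estimates are routine Gaussian moment bounds, but assembling them into a single Kolmogorov-type bound with the right power of $n$ is the delicate part.
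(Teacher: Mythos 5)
Your treatment of $(i)$ and $(ii)$ is essentially sound and in the same spirit as the paper (blockwise estimates plus a modulus-of-continuity bound for $w$), though note that \eqref{4.2} interpolates the \emph{delayed} endpoints $w^k((l-1)h)\to w^k(lh)$ over $[lh,(l+1)h)$ rather than the usual polygonal interpolant; this does not change the analysis but is worth getting right.

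For $(iii)$ there is a genuine gap. You assert that the centered block contributions form a martingale difference array with respect to $\mathcal F_{lh}$, but they do not. Indeed, on $[lh,(l+1)h)$ one computes $E[\delta w^i_n(r)\mid\mathcal F_{lh}]=\Delta_h w^i(t_{l-1})-\tfrac{r-lh}{h}\varkappa(\Delta_h w^i(t_{l-1}))$, which is not zero, so the conditional mean of $\int_{lh}^{(l+1)h}\delta w^i_n(r)Dw^j_n(r)\,dr$ given $\mathcal F_{lh}$ is a nonzero $\mathcal F_{lh}$-measurable quantity even after subtracting the deterministic Stratonovich centering. The paper avoids this: for $i=j$ it applies It\^o's formula to $|\delta w^i_n(t)|^2$ to obtain $s^{ii}_n(t)=\int_0^t\delta w^i_n\,dw^i-\tfrac12|\delta w^i_n(t)|^2$ and then uses BDG plus $(i)$; for $i\neq j$ it decomposes $s^{ij}_n=I_1+I_2+I_3$ and shows each $I_q$ is a sum of \emph{independent} centered summands (exploiting the Markov property, the independence of $w^i$ and $w^j$, and the symmetry of $\varkappa$ of a symmetric variable), after which Doob's inequality and a log-convexity argument for moments finish the job. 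To salvage your approach you would either need to pass to this finer decomposition or carry the $\mathcal F_{lh}$-measurable drift terms along and control them separately.

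Your $(iv)$ contains a power-counting error that makes the bound diverge. You bound $|Ds^{ij}_n|$ on a block by $\tfrac1h\sup|\delta w^i_n|+\tfrac12\sim h^{-1/2}$ and then compute $\sum_l h\cdot(h^{-1}\cdot h^{1/2})^p=\sum_l h^{1-p/2}\sim h^{-p/2}$, which blows up as $n\to\infty$. The issue is that the deterministic bound $|Dw^j_n|\le 1/h$ is far from the typical size $h^{-1/2}$ of $\tfrac1h\varkappa(\Delta_h w^j(t_{l-1}))$. The paper instead writes $Ds^{ij}_n=\delta w^i_n\,Dw^j_n-\tfrac{\delta_{ij}}2$ and uses Cauchy--Schwarz (or, for $i\neq j$, independence) to split the expectation of the product, giving $E[|\delta w^i_n(t)|^p|Dw^j_n(t)|^p]\le(E|\delta w^i_n(t)|^{2p})^{1/2}(E|Dw^j_n(t)|^{2p})^{1/2}\le N h^{p/2}\cdot Nh^{-p/2}=N$, so the integral is $O(T)$ uniformly in $n$.

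Finally, for $(v)$ you propose a Kolmogorov--Chentsov argument that you yourself flag as delicate. The paper sidesteps this entirely with the one-line interpolation inequality (Theorem 3.2.1 of \cite{Kr_96}) $\|s^{ij}_n\|_{C^{1/2-\theta}[0,T]}\le N(h^{1/2+\theta}\|Ds^{ij}_n\|_{L_\infty[0,T]}+h^{\theta-1/2}\|s^{ij}_n\|_{C[0,T]})$, combined with $(iii)$ and a Cauchy--Schwarz bound showing $E\|Ds^{ij}_n\|_{L_\infty}^p\le Nh^{-2\varepsilon}$. This is considerably cleaner than a blockwise H\"older increment analysis of $s^{ij}_n$ and is the route you should take.
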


		\begin{proof}
		Denote 
		$h = 1/n$, 
		$
			  t_k  = k h, k \in \{ -1, 0, 1, \ldots\}.
		$	 
		For any $a > 0$, 
		$
		   f : \bR \to \bR,
		$
		   denote
		$$
			\Delta_a f (x) = f (x + a) -  f(x),
		$$
		 $$
			 \rho_{f} ( h, T) = \sup_{t, s \in [0, T]: |t - s| \leq h} |f (t) - f(s)|.
		 $$
		For the sake of convenience,
		in  the proofs $(i), (ii)$
		we denote 
		$
		   w: = w^i, 
				    w_n  := w^i_n. 
		$

		$(i)$
		 For 
		$ 
		  t  \in [t_l, t_{l+1}),
		$
		  we have
		\begin{equation}
				\label{4.8}
		  |\delta w_n (t)|  
						\leq  |w(t) - w (t_{l-1})| + \varkappa (\Delta_{h} w (t_{l-1})),
		\end{equation}
		and 
		\begin{equation}
		   \label{4.14}			
  					\varkappa (\Delta_{h} w (t_{l-1})) \leq \Delta_{ h } (w (t_{ l - 1 })) + I_{A_{l, n}},
		\end{equation}
		where 
		$$
			A_{l, n} = \{ \Delta_{ h } w (t_{l-1}) > 1\}.
		$$
		Denote 
		$
			M = \lfloor Tn \rfloor. 
		$
		 By Chebyshov's inequality, for any $q > 0$, 
		\begin{equation}
						\label{4.1}
								P (\cup_{l = 0}^M A_{l, n}) \leq 	\sum_{ l = 0}^M P (A_{l, n}) \leq  N/h \, E |w (h)|^q \leq N h^{ q/2 -  1}.
		\end{equation}
		Then,
		\begin{equation}
		\label{4.13}
		\begin{aligned}
						& E  \max_{l = 0, \ldots, M}  |\varkappa (\Delta_{h} w (t_{l-1}))|^p  	\\
																		&\leq  N (E \rho^p_{w} (h, T) + P (\cup_{l = 0}^M A_{l, n})) \leq N h^{p/2 - \varepsilon},
		\end{aligned}
		  \end{equation}
		where in the second inequality we used the estimate of $\rho_w$ which we state below.
		   By Theorem 2.3.2 of \cite{Kr_94}, 
		 for any $\alpha > 0$,
		 there exists a positive random variable 
		$
		  N_{\alpha, T}
		$
		 such that,
		for any 
		$
		    r > 0,  \, E N_{\alpha, T}^r < \infty,
		$
		 and
		  \begin{equation}
					\label{4.3}
								\rho_{w} (\lambda, T) \leq  N_{\alpha, T} \, \lambda^{ 1/2 - \alpha}, \, \forall \omega \in \Omega,  \lambda \in [0, T].
		\end{equation}
		Then, the claim  follows from 
               \eqref{4.8}, \eqref{4.13} and \eqref{4.3}.
		By the way, similarly, for all $l$, we have
		\begin{equation}
				\label{4.19}
			E | \varkappa (\Delta_{h} w (t_l))|^p \leq N h^{p/2}.
		\end{equation}

		$(ii)$
		Fix any
		 $
		    \alpha \in (0, \theta).
		  $
		 First, we consider the case when 
		  $
			 |t - s| \geq h,
		  $
			 $
			    t, s \in [0, T].
			  $
		   We have
		   $$ 
		        1/(t-s)^{1/2 - \theta} | \delta w_n (t)  - \delta w_n (s)| 
			  \leq 2 h^{ \theta - 1/2 } ||\delta w_n  ||_{ C [0, T] },
		    $$
		and this combined with $(i)$ yields the claim.

		Next,
		we take any $t, s \in [0, T]$ such that $|t-s| < h$.
		There are  two subcases: either 
		\begin{equation}
		  \label{4.10}
					 (t, s)   \in  B_1 = \cup_{l = 0}^M \{ (t, s) \in [0, T]^2:  t, s \in [t_l, t_{ l+1 }] \}
 		\end{equation}
		or
		 \begin{equation}
		    \label{4.11}
		   \begin{aligned}
		 			(t, s) \in B_2 = &\cup_{l = 0}^{M-1} \{ (t, s) \in [0, T]^2:	  |t - s| < h, \\
																			&  t_l < s \leq t_{l+1} \leq t < t_{ l + 2} \}.
		   \end{aligned}
		   \end{equation}

		To handle  \eqref{4.10} we write 
		  \begin{equation}
				      \label{4.4}
                                     |\delta w_n (t)  - \delta w_n (s)| \leq | w_n (t) - w_n (s)| + |w (t) - w (s)|.
                       \end{equation}
		Using \eqref{4.3} and the fact that 
		$
			|t - s| \leq h,
		$
		   we get
		\begin{equation}
				\label{4.6}
			|w (t) - w (s)|  \leq N_{\alpha, T} h^{  \theta - \alpha } | t -  s|^{1/2 - \theta}.
		\end{equation}                      
	      Next,   
			by   \eqref{4.2}, \eqref{4.13} we obtain
                          \begin{equation}
					\label{4.5}
			  \begin{aligned}
                        E  & \sup_{ (t, s) \in B_1 }      | w_n (t) - w_n (s) |^p  \leq \\
															& |t - s|^p/h^p \,  E  \max_{l = 0, \ldots, M}   |\varkappa (\Delta_{h} w (t_{l - 1}))|^p   
																													     \leq N  h^{ (\theta - \alpha) p} \, |t - s|^{(1/2 - \theta)p}. 
			   \end{aligned}
		       \end{equation}
		       Then, the claim in this subcase follows from \eqref{4.4} - \eqref{4.5}.

		We move to
		\eqref{4.11}.
		Observe that
		$$
			   | w_n (t)  -  w_n (s)| 
		$$
		  $$
			\leq  |w_n (t) - w_n (t_{l+1})|  + |w_n (s) - w_n (t_{l+1})|,
		  $$
		and 
		$
		   (t, t_{l+1}), (s, t_{l+1}) \in B_1.
		$
		This combined with \eqref{4.5} and \eqref{4.6} proves the assertion for the second subcase.

		$(iii)$	
		We follow the proof of Proposition 6.3.1 of \cite{S_05}.
		First, we consider the case $i = j$.  By It\^o's formula, for any $t$, a.s. 
		$$
			|w^i (t) - w^i_n (t)|^2 = 2  \int_0^t (w^i (s) - w^i_n (s)) \, d(w^i (s)  - w^i_n (s)) + t,
		$$
		and, then,
		$$
			s^{i i}_n (t) = \int_0^t \delta w^i_n (s) \, dw^i (s) - 1/2 \, |w^i (t) - w^i_n (t)|^2.
		$$
 		Using Burkholder-Gundy-Davis inequality and assertion $(i)$,
		we get
		\begin{equation}
					\label{4.15}
			E || s^{i i}_n (t) ||^p_{ C [0, T] } \leq N E || \delta w_n^i ||^p_{ C [0, T] }  \leq N h^{p/2 - \varepsilon}.
		\end{equation}

		Now we assume $i \neq j$.
		Note that, 
		for 
		$t \in [t_k, t_{k+1}]$,
		we have
		$$
			w^i_n (t) = -\Delta_h w^i (t_{k - 1}) + w^i (t_k) + 1/h (t - t_k) \varkappa(\Delta_h w^i (t_{k-1})).
		$$
		Then, for each $\omega, t$ we may write
		\begin{equation}
					\label{4.16}
			s^{i j}_n (t) =  I_1 (t) + I_2 (t)  + I_3 (t),
		\end{equation}
		where
		$$
			I_{1} (t) = \sum_{ l = 0}^{ \lfloor tn \rfloor  }    \varkappa (\Delta_{h} w^j (t_{l - 1}))   \int_{t_l}^{t_{l+1}}  (w^i (s) -  w^i (t_l))/h  \, I_{s \leq t} \, ds,
		$$
		 $$
			I_2 (t ) =  \sum_{ l = 0}^{ \lfloor tn \rfloor  }    \varkappa (\Delta_{h} w^j (t_{l - 1}))  \Delta_h w^i (t_{l - 1}),
		$$
		$$
				I_3 (t)  = -  h^{-2}  \sum_{ l = 0}^{ \lfloor tn \rfloor  }  \int_{t_l}^{t_{l+1}} (s - t_l) I_{s \leq t }  \, ds \, \varkappa (\Delta_{h} w^j (t_{l - 1}))  \varkappa (\Delta_{h} w^i (t_{l - 1})).
		$$
		 Observe that  
		$
		  \varkappa (\Delta_{h} w^i (t_{l - 1}))
		$
		is a symmetric random variable as a composition of an odd function with a symmetric random variable.
		It follows from the Markov property of Wiener process that
		$
		   I_1 (t) 
		$ 
		is a sum of independent centered random variables, and, hence,	by  Doob's maximal inequality
		\begin{equation}
					\label{4.17}
			E \sup_{t \in [0, T]}  |I_1 (t) |^p \leq N  E | I_1 (T)  |^p \leq N h^{p/2}.
		\end{equation}
		 Let us explain how to get the second inequality in \eqref{4.17}. 
		For 
		$
		   p = \{2, 4, \ldots, \}
		$ 
		\eqref{4.17} follows from  an elementary combinatorial argument (see the proof of Lemma 6.3.2 of \cite{S_05}) combined with \eqref{4.19}.
		To prove the claim for any $p > 1$, 
			 we pick some 
			$
				k \in \bN \cup \{0\}
			$
			 such that
			$
				2k < p \leq 2k + 2,
			$
			and let $\theta$ be a number determined by the equation
			$$
				1/p = (1 - \theta)/(2k) + \theta/(2k +2). 
			$$
			Then, the inequality follows from the assertion for 
			$
			  p = 2k, 2k + 2
			$
			combined
			  with  the log-convexity of $L_p$ norms.
		Further, the same argument
		yields 
		\begin{equation}
					\label{4.18}
			E  \sup_{t \leq T} (| I_{2} (t) |^p + |I_3 (t)|^p)  \leq N  h^{p/2}.
		\end{equation}
		The claim follows from \eqref{4.15} - \eqref{4.18}.

        $(iv)$	
		By Cauchy-Schwartz inequality 
		 we get
		$$
			E \int_0^T |s^{i j}_n (t)|^p \, dt
		$$
		 $$
			 \leq
		     \sum_{k = 0}^{ \lfloor T n \rfloor } h^{-p} (\int_{t_k}^{t_{k+1}}   E |\delta w^i_n (t)|^{2p}  \, dt)^{1/2}
																						 \,  (\int_{t_k}^{t_{k+1}}  \, (E |\varkappa (\Delta_h w^j (t_{k-1}))|^{2p})^{1/2} \, dt)^{1/2}.
		$$
		This combined with \eqref{4.8} and \eqref{4.14} proves the claim.

		$(v)$	
			By Cauchy-Schwartz inequality
		we have
	     $$
	   	  E || D s^{i j }_n ||_{ L_{ \infty}[0, T] }^p \leq  h^{-p} M_{1, n} M_{2, n},
	     $$
	where 
	      $$
		M_{1, n} = (E || \delta w^i_n  ||_{  C [0, T] }^{2p})^{1/2}, 
		 \quad 
								      M_{2, n} = (E  \max_{l = 0, \ldots, \lfloor T n \rfloor}  |\varkappa (\Delta_{h} w (t_{l-1}))|^{2p} )^{1/2}.
	      $$
	By  $(i)$ and  \eqref{4.13}  
	  $$
		M_{1, n} + M_{2, n}  \leq N (p,  \varepsilon, T)  h^{ p/2 - \varepsilon}, 
	  $$
   	  Then, by the above
	\begin{equation}
		\label{4.12}
					E ||D s^{ i j }_n ||^p_{ L_{\infty} [0, T] } \leq N (p, \varepsilon, T) h^{- 2\varepsilon}.
	\end{equation}
	By the interpolation inequality 
		 (see, for example, Theorem 3.2.1 in \cite{Kr_96}),
		 for any $\lambda > 0$,
		$$
		      || s^{ i j }_n ||_{ C^{1/2 - \theta} [0, T]}
			 \leq  	N (\theta, T) (h^{1/2 + \theta} ||D s^{i j}_n||_{ L_{\infty} [0, T]} 
				+ h^{ \theta - 1/2 } || s^{i j}_n ||_{ C[0, T]}). 
		$$
		 We  finish the proof by combining this with  $(iii)$  and \eqref{4.12}.
	  	\end{proof}

		\mysection{Appendix B.}

      	  \begin{lemma}
		\label{lemma 4.1}
 	Let
        $
	  g \in C^1_{loc} (\bR),
	$
	   $
		D g \in L_{\infty} (\bR),
	   $
  		$g(0) = 0$,
	 and 
	 $u \in H^1_p (\bR^d)$.
	Then, 
	$$
		|| g (u (\cdot) ) ||_{ 1, p } \leq N ( d, p) || D g ||_{\infty}  || u ||_{ 1, p }.	   
	  $$		
	\end{lemma}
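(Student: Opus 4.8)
The plan is to reduce the assertion to the analogous estimate in the Sobolev space $W^1_p(\bR^d)$, which coincides with $H^1_p(\bR^d)$ up to equivalence of norms (Chapter 13 of \cite{Kr_08}), and then to establish the chain rule $D_i\big(g(u(\cdot))\big) = (Dg)(u(\cdot))\,D_i u(\cdot)$ together with elementary pointwise bounds. First I would note that, since $Dg\in L_\infty(\bR)$ and $g(0)=0$, the function $g$ is globally Lipschitz with constant $\|Dg\|_\infty$ and satisfies $|g(y)|\le \|Dg\|_\infty|y|$ for all $y\in\bR$. In particular $\|g(u(\cdot))\|_p\le\|Dg\|_\infty\|u\|_p$, so the whole matter reduces to showing that $g(u(\cdot))\in W^1_p(\bR^d)$ with weak derivatives given by the chain rule; once that is done, $\|D_i\big(g(u(\cdot))\big)\|_p\le\|Dg\|_\infty\|D_iu\|_p$ is immediate, and summing over $i$ and invoking the equivalence of the $H^1_p$ and $W^1_p$ norms yields the stated inequality with a constant $N(d,p)$.

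To justify the chain rule I would argue by approximation. Choose $u_n\in C^\infty_0(\bR^d)$ with $u_n\to u$ in $W^1_p(\bR^d)$ and, passing to a subsequence, with $u_n\to u$ and $D_iu_n\to D_iu$ almost everywhere. Each $g(u_n)$ is a compactly supported $C^1$ function (here $g(0)=0$ is used), and the classical chain rule gives $D_i\big(g(u_n)\big)=(Dg)(u_n)\,D_iu_n$. The Lipschitz bound gives $g(u_n)\to g(u)$ in $L_p$, and for the derivatives I would write
$$
(Dg)(u_n)\,D_iu_n-(Dg)(u)\,D_iu=(Dg)(u_n)\,(D_iu_n-D_iu)+\big((Dg)(u_n)-(Dg)(u)\big)\,D_iu,
$$
bounding the first term in $L_p$ by $\|Dg\|_\infty\|D_iu_n-D_iu\|_p\to0$ and the second by dominated convergence, using that $(Dg)(u_n)\to(Dg)(u)$ a.e. by continuity of $Dg$ and that the second term is dominated by $2\|Dg\|_\infty|D_iu|\in L_p$. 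Hence $D_i\big(g(u_n)\big)\to(Dg)(u)\,D_iu$ in $L_p$; combined with $g(u_n)\to g(u)$ in $L_p$, the closedness of the weak-derivative relation identifies $D_i\big(g(u(\cdot))\big)=(Dg)(u(\cdot))\,D_iu(\cdot)$, which completes the argument.

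The step I expect to be the main obstacle is the passage to the limit in $(Dg)(u_n)\,D_iu_n$: since $Dg$ is only assumed continuous (and bounded), uniform continuity is unavailable, so one must rely on a.e. convergence along a subsequence together with the dominated convergence theorem rather than on a direct estimate. Everything else — the pointwise $L_p$ bound on $g(u)$, the classical chain rule for smooth functions, the density of $C^\infty_0(\bR^d)$ in $W^1_p(\bR^d)$, and the equivalence of the $H^1_p$ and $W^1_p$ norms — is standard and enters only routinely.
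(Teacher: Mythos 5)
Your proof takes essentially the same route as the paper's: pass to $W^1_p(\bR^d)$ by equivalence of norms, apply the chain rule $D_i(g(u)) = (Dg)(u)\,D_iu$, and bound using the Lipschitz constant $\|Dg\|_\infty$ together with $g(0)=0$. The paper simply invokes the chain rule as known, whereas you supply the standard approximation argument justifying it; the content is the same.
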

	 
	\begin{proof}
	Recall that the spaces 
	 $W^1_p (\bR^d)$ 
	and
	 $H^1_p (\bR^d)$ 
	coincide as sets
	 and have equivalent norms.
	By this and the chain rule in $W^1_p (\bR^d)$
	we may write
	$$
		|| g (u (\cdot)) ||_{ 1, p }  
							\leq N (d, p) ( || g ( u (\cdot) ) ||_p 
												  +  || D_i u (\cdot) D g (u (\cdot))||_p)
	$$
	  $$
		\leq N || D g ||_{\infty} (|| u ||_p + || D_i u ||_{p})
	  $$
	   $$
								 = N ||D g ||_{\infty} ||u ||_{W^1_p (\bR^d)}
														 \leq N || D g ||_{\infty} || u ||_{1, p}. 
	   $$
	\end{proof}

	\begin{lemma}
		\label{lemma 4.2}
	 Let 
	$
		 u, v \in H^1_p (\bR^d)
	$
	and
	assume that 
		\begin{equation}
		   \label{4.2.1}
		  || D g||_{\infty} + || D^2 g ||_{\infty} \leq K,
											\quad    || D_i u ||_{\infty}   \leq R.
		\end{equation}
		 Then, 
		$$
			|| g(u (\cdot) ) - g(v (\cdot) ) ||_{1, p} \leq N (d, p) K  ( 1 +  R) || u  - v ||_{1, p}.
		$$
	\end{lemma}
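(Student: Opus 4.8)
\emph{Proof sketch.} The plan is to transfer the estimate to the classical Sobolev space $W^1_p(\bR^d)$, where the chain and product rules are available, and to split the gradient of $g(u)-g(v)$ so that only the bound $\|D_i u\|_\infty\le R$ is ever used.

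First I would recall, as in the proof of Lemma \ref{lemma 4.1}, that $W^1_p(\bR^d)$ and $H^1_p(\bR^d)$ coincide as sets with equivalent norms; hence it suffices to prove the claimed inequality with both occurrences of $\|\cdot\|_{1,p}$ replaced by the $W^1_p(\bR^d)$ norm, at the cost of a factor $N(d,p)$. Since $Dg\in L_\infty$, the mean value theorem gives the pointwise bound $|g(u(x))-g(v(x))|\le \|Dg\|_\infty\,|u(x)-v(x)|$, so that $g(u)-g(v)\in L_p$ with $\|g(u)-g(v)\|_p\le K\,\|u-v\|_p$. Applying the chain rule (valid because $g\in C^1_{loc}$ and $Dg$ is Lipschitz with constant $\|D^2 g\|_\infty$), the functions $g(u)$ and $g(v)$ are weakly differentiable with $D_i(g(u))=(Dg)(u)\,D_i u$ and $D_i(g(v))=(Dg)(v)\,D_i v$, and therefore $g(u)-g(v)\in W^1_p(\bR^d)$.

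Next I would estimate the gradient of the difference by writing
\[
D_i(g(u))-D_i(g(v)) = ((Dg)(u)-(Dg)(v))\,D_i u + (Dg)(v)\,(D_i u - D_i v).
\]
For the first term, the mean value theorem applied to $Dg$ gives $|(Dg)(u(x))-(Dg)(v(x))|\le \|D^2 g\|_\infty\,|u(x)-v(x)|$, and combined with $\|D_i u\|_\infty\le R$ this yields an $L_p$ bound by $KR\,\|u-v\|_p$. For the second term, $\|(Dg)(v)\,(D_i u-D_i v)\|_p\le \|Dg\|_\infty\,\|D_i(u-v)\|_p\le K\,\|D(u-v)\|_p$. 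Summing over $i$ and adding the $L_p$ estimate for $g(u)-g(v)$ gives $\|g(u)-g(v)\|_{W^1_p(\bR^d)}\le N(d,p)\,K(1+R)\,\|u-v\|_{W^1_p(\bR^d)}$, and passing back to the Bessel-potential norm via the norm equivalence completes the argument.

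The only real point of the proof is the algebraic choice of the decomposition above: since no bound on $\|D_i v\|_\infty$ is assumed, one cannot group the factor $D^2 g$ with $D_i v$; instead one pairs $D^2 g$ (which, via the mean value theorem applied to $Dg$, already supplies the difference $u-v$ in $L_p$) with the controlled quantity $D_i u$, leaving the bounded factor $Dg$ to absorb $D_i(u-v)$. Everything else is routine.
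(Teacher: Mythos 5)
Your proof is correct and follows essentially the same route as the paper: pass to the equivalent $W^1_p$ norm, apply the chain rule, and split $D_i u\,(Dg)(u) - D_i v\,(Dg)(v)$ as $D_i u\,[(Dg)(u)-(Dg)(v)] + (Dg)(v)\,[D_i u - D_i v]$ so that only the assumed $\|D_i u\|_\infty \le R$ is needed. Your closing remark about why the decomposition must pair $D^2 g$ with $D_i u$ (and not $D_i v$) is exactly the point the paper leaves implicit.
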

	           \begin{proof}
		By the argument of the proof of Lemma \ref{lemma 4.1}  we have 
				$$
					 || g(u (\cdot) ) - g(v (\cdot)  ) ||_{ 1, p }  
				$$
				 $$
					\leq  N (d, p) (|| g (u (\cdot) ) - g (v (\cdot) ) ||_p + 
															 ||  D_i u (\cdot)  D g (u (\cdot) ) -  D_i v (\cdot) D g (v (\cdot) )  ||_p)
				 $$
				  $$
					\leq 	N K ||u - v||_p 
									+ N || D_i u (\cdot) [ D g (u (\cdot) ) -  D g (v (\cdot) ) ] ||_p
				  $$
			           $$
						 +  N || D g(v (\cdot) ) [ D_i u (\cdot) - D_i v (\cdot) ]||_p.
				   $$
				By this and \eqref{4.2.1}
				we obtain the assertion of the lemma.

		\end{proof}

		\begin{lemma}
					\label{lemma 4.3}
			Let  
			$ p > d$, 
			 $\delta \in (0,1 )$, 
				 $\gamma \in (d/p, 1), \tau > 0$
			 be  numbers.
			Let $g: \bR \to \bR$ be a  function 
			such that
			  $D g \in C^{1 + \theta} (\bR)$,
			and $g(0) = 0$.
			Denote  
		             $$  
				   \cB:  =  C^{\delta} ([0, \tau], H^{1 + \gamma}_p (\bR^d) )
			     $$
			and take any $u \in \cB$.
				Then, there exists a constant $N$ independent of $g$ and $u$ such that
			$$
			   	||g(u)||_{\cB } \leq N  
								 || D g ||_{ C^{1 + \theta} } 
											 ( ||u||_{ \cB  } + ||u||^{2 + \theta}_{ \cB } ).
			$$
			\end{lemma}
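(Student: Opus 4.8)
\emph{Proof proposal.} Write $\gamma$ for the smoothness index appearing in $\cB$, so $1+\gamma\in(1,2)$, and put $u_t:=u(t,\cdot)$. Since $p>d$ and $\gamma>d/p$ we have $1+\gamma-d/p>1$, hence $H^{1+\gamma}_p(\bR^d)\hookrightarrow C^{1}(\bR^d)$ and $H^{\gamma}_p(\bR^d)\hookrightarrow C^{0}(\bR^d)$; in particular $u_t$ and $Du_t$ are bounded, with $\|u_t\|_{C^1}\le N\|u_t\|_{1+\gamma,p}\le N\|u\|_\cB$. The plan is to bound the ``amplitude'' $\sup_t\|g(u_t)\|_{1+\gamma,p}$ and the ``time $\delta$-Hölder seminorm'' $\sup_{0\le s<t\le\tau}|t-s|^{-\delta}\|g(u_t)-g(u_s)\|_{1+\gamma,p}$ separately, in each case reducing to $H^{\gamma}_p$-estimates via the equivalence $\|v\|_{1+\gamma,p}\approx\|v\|_p+\sum_i\|D_iv\|_{\gamma,p}$ and the chain rule $D_ig(w)=g'(w)D_iw$.

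For the amplitude: $\|g(u_t)\|_p\le\|Dg\|_\infty\|u_t\|_p$ because $g(0)=0$, and for $D_ig(u_t)=g'(u_t)D_iu_t$ I would use the elementary multiplier bound $\|\phi w\|_{\gamma,p}\le N(\|\phi\|_\infty+\|D\phi\|_\infty)\|w\|_{\gamma,p}$ valid for $\gamma\in(0,1)$ (it follows from the first-difference description of $H^\gamma_p$, since $\int_{|h|\le1}|h|^{2-d-2\gamma}\,dh<\infty$ when $\gamma<1$); applied with $\phi=g'(u_t)$, for which $\|\phi\|_{C^1}\le\|g'\|_\infty+\|g''\|_\infty\|Du_t\|_\infty\le N\|Dg\|_{C^{1+\theta}}(1+\|u\|_\cB)$, and $w=D_iu_t$, it yields $\|g(u_t)\|_{1+\gamma,p}\le N\|Dg\|_{C^{1+\theta}}(\|u\|_\cB+\|u\|_\cB^2)$.

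For the time regularity, fix $s<t$, set $\zeta:=u_t-u_s$ (so $\|\zeta\|_{1+\gamma,p}\le\|u\|_\cB|t-s|^\delta$) and $w_r:=(1-r)u_s+ru_t$, and write $g(u_t)-g(u_s)=\int_0^1 g'(w_r)\,\zeta\,dr$; it thus suffices to bound $\|g'(w_r)\zeta\|_{1+\gamma,p}$ uniformly in $r$ by $N\|Dg\|_{C^{1+\theta}}(\|u\|_\cB+\|u\|_\cB^{2+\theta})|t-s|^\delta$. From $D_i(g'(w_r)\zeta)=g'(w_r)D_i\zeta+g''(w_r)(D_iw_r)\zeta$, the contributions $\|g'(w_r)\zeta\|_p$ and $\|g'(w_r)D_i\zeta\|_{\gamma,p}$ are handled exactly as in the amplitude step — the multiplier bound with $\phi=g'(w_r)$ applied to $D_i\zeta$, the smallness $|t-s|^\delta$ being carried by $\zeta$ — and are $\le N\|Dg\|_{C^{1+\theta}}(\|u\|_\cB+\|u\|_\cB^2)|t-s|^\delta$. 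The one genuinely delicate term is $g''(w_r)(D_iw_r)\zeta$. I would regard $V:=(D_iw_r)\zeta$ as a single element of $H^\gamma_p\cap C^0$ and use the product rule $\|ab\|_{\gamma,p}\le N(\|a\|_\infty\|b\|_{\gamma,p}+\|b\|_\infty\|a\|_{\gamma,p})$ ($\gamma\in(0,1)$, same proof) to get $\|V\|_{\gamma,p}+\|V\|_\infty+\|V\|_p\le N\|u\|_\cB^2|t-s|^\delta$; the factor $g''(w_r)$ is bounded by $\|g''\|_\infty$ and, as the composition of the $\theta$-Hölder function $g''$ with the Lipschitz function $w_r$, is $\theta$-Hölder with $[g''(w_r)]_{C^\theta}\le[g'']_{C^\theta}\|Dw_r\|_\infty^\theta\le N\|Dg\|_{C^{1+\theta}}\|u\|_\cB^\theta$ — and it is this extra power $\|u\|_\cB^\theta$ that turns the exponent $2$ into $2+\theta$. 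Combining the $\theta$-Hölder factor $g''(w_r)$ with $V\in H^\gamma_p$ so as to stay in $H^\gamma_p$ is precisely a Nemytskii/composition step for $H^{1+\gamma}_p$: when $\theta>\gamma$ it is again elementary, since then $g''(w_r)$ is a pointwise multiplier on $H^\gamma_p$ with norm $\le N(\|g''\|_\infty+[g''(w_r)]_{C^\theta})$, and multiplying through $V$ gives $\|g''(w_r)V\|_{\gamma,p}\le N\|Dg\|_{C^{1+\theta}}(\|u\|_\cB^2+\|u\|_\cB^{2+\theta})|t-s|^\delta$; for general $\theta,\gamma\in(0,1)$ one must instead invoke the composition operator on Bessel-potential spaces — available because $1+\gamma\in(1,2)$ and $g'\in C^{1+\theta}$ with $\theta>0$ — in the form that is Lipschitz in $u$, with explicit polynomial (degree $\le 1+\theta$) dependence of the constant on $\|u\|_\infty\le N\|u\|_\cB$.

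This last term is the main obstacle: the balance between the mere $\theta$-Hölder regularity of $g''(w_r)$ and the $\gamma$-regularity required of the product is tight, and is closed only because the term always carries the extra factor $\zeta$ (supplying both the $|t-s|^\delta$ and one more power of $\|u\|_\cB$), so that for the ``bad'' part of the product only an $L_p$-bound is needed, which is finite since $\zeta\in L_p$; it is this bookkeeping, rather than any new idea, that is the technical heart. Granting it, one adds the three contributions, divides by $|t-s|^\delta$, takes the supremum over $s<t$, and combines with the amplitude bound to obtain $\|g(u)\|_\cB\le N\|Dg\|_{C^{1+\theta}}(\|u\|_\cB+\|u\|_\cB^{2+\theta})$.
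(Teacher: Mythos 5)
Your proof is structurally parallel to the paper's in its broad strokes (split into amplitude and time $\delta$-H\"older seminorm, reduce $\|\cdot\|_{1+\gamma,p}$ to $\|\cdot\|_p$ plus the $H^{\gamma}_p$-norm of first derivatives, expand by the chain rule), but it diverges in the decomposition and, crucially, does not close the estimate for the term you yourself flag as ``the main obstacle.'' For $\|g''(w_r)(D_iw_r)\zeta\|_{\gamma,p}$ you offer two routes: a multiplier bound when $\theta>\gamma$, and for general $\theta,\gamma\in(0,1)$ an appeal to an unstated Nemytskii/composition theorem on $H^{1+\gamma}_p$ that is Lipschitz in $u$ with a constant polynomial in $\|u\|_\infty$. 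Neither is adequate: no hypothesis of the lemma orders $\theta$ and $\gamma$ (they are independent parameters), and the composition theorem in the strength you invoke would essentially subsume the lemma and is not verified. The remark that only an $L_p$-bound is needed for the ``bad'' part is the right intuition, but in your decomposition it is not realized concretely, so the argument as written has a genuine gap.

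The paper closes exactly this point by a more elementary route that avoids any Nemytskii machinery for the time-increment. After reducing to $J^{(1)}=\|g(u_t)-g(u_s)\|_{\gamma,p}$ and $J^{(2)}=\|Dg(u_t)D_iu_t-Dg(u_s)D_iu_s\|_{\gamma,p}$, it splits $J^{(2)}$ (subtracting $Dg(0)$) into three products and exploits that $H^{\gamma}_p(\bR^d)$ is a multiplication algebra since $\gamma>d/p$: in the delicate term $J^{(2,2)}=\|D_iu_s\,(Dg(u_t)-Dg(u_s))\|_{\gamma,p}$ one peels off $D_iu_s$ at the cost of $\|u_s\|_{1+\gamma,p}$, and then bounds the remaining factor by the \emph{stronger} norm $\|Dg(u_t)-Dg(u_s)\|_{\gamma,p}\le\|Dg(u_t)-Dg(u_s)\|_{1,p}$. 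In $W^1_p$ the chain rule is elementary, and the $C^\theta$-continuity of $D^2g$ enters only through the pointwise bound $\|D_iu_t\,(D^2g(u_t)-D^2g(u_s))\|_p\le N_g\|u_t\|_{1,p}\|u_t-u_s\|_\infty^{\theta}$, followed by the embedding $H^{1+\gamma}_p\hookrightarrow C(\bR^d)$; this is what produces the exponent $2+\theta$ with no constraint relating $\theta$ and $\gamma$. You would need to find an analogue of this up-shift from $H^\gamma_p$ to $H^1_p$ (or otherwise make precise your ``only $L_p$ is needed'' heuristic) to complete your argument; as it stands, the key term is asserted rather than proved.
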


			\begin{proof}	
			For the sake of  convenience,
		 we denote 
               $ u_t = u (t, \cdot)$,
		and we omit the dependence of $u$ on the spatial variable $x$.
			 We set
			$
		  	     N_g =  || D g ||_{ C^{1 + \theta} }.
			$

			First, we prove the supremum norm estimate.
			Note that
			$
			  H^{1 + \gamma}_p (\bR^d)
			$ is embedded in 
			$
			  C (\bR^d)
			$
			since $ 1 + \gamma > d/p$
			(see, for instance, Theorem 13.8.1 of \cite{Kr_08}).
			Then, by this and Corollary 3 combined with  Remark 3 of Section 5.3.7 of \cite{RS_96} we have
			\begin{equation}
				E \sup_{t \leq T} || g (u) ||_{ 1 + \gamma, p } \leq N_g (|| u ||_{ \cB } + || u ||^{1 + \gamma}_{ \cB }).
			\end{equation} 	
					
			Next, take any 
			$
				s, t \in [0, \tau]
			$
			such that 
			  $s \neq t$.
			By the fact that  
			 $(1 - D_i)$ 
			is a strongly elliptic differential operator of order $1$
			we obtain (see Theorem 13.3.10 of \cite{Kr_08})
			\begin{equation}
			   \label{4.3.1}
			    || g ( u_t  ) - g ( u_s) ||_{1+ \gamma, p } \leq  N (J^{(1)} + J^{(2)}),
			\end{equation}
			where
			  $$
			          J^{(1)} =    || g ( u_t ) - g( u_s )  ||_{ \gamma, p },
			 $$
			  $$
				J^{(2)} =  || D g ( u_t  ) D_i u_t  - D g( u_s ) D_i u_s  ||_{ \gamma, p }.
			  $$

			  Recall that, by the elementary embedding (see Section \ref{section 1}) we may replace
			 $\gamma$ by $1$ in the expression for  $J^{(1)}$. 
			      Since 
				$\gamma > d/p$, 
				by the embedding theorem for 
				  $H^{\mu}_p (\bR^d)$ spaces 
				 we have
				$$
				      \sup_{t \leq T} ||D_i u_t ||_{\infty} \leq N || u ||_{ \cB }.
				$$
			 	Then, by Lemma \ref{lemma 4.2} and what was just said we obtain
			        \begin{equation}
				   \label{4.3.2}
					J^{(1)} \leq  N N_g (1  + || u ||_{ \cB })   || u_t - u_s||_{1, p}.
			          \end{equation}

			Next, by the triangle inequality 
			\begin{equation}
					\label{4.3.5}
				J^{(2)} \leq   J^{ (2, 1) } + J^{ (2, 2)}  + J^{ (2, 3) },
			\end{equation}
			   where
			 $$
			       J^{(2, 1)} =   || (D g ( u_t ) - D g (0)) ( D_i u_t  - D_i u_s)  ||_{ \gamma, p }, 
			 $$
			  $$
				J^{(2, 2)} = 	  ||  D_i u_s (D g( u_t ) - D g( u_s )) ||_{\gamma, p},
			  $$
			   $$
				J^{(2, 3)} = |D g (0)| || u_t - u_s ||_{1 + \gamma, p}.
			   $$
			It is well-known that
			$
			   H^{\gamma}_p (\bR^d)
			$ 
			is a multiplication algebra
                          because $\gamma > d/p$ 
                          (see, for example,  Theorem 1 in Section 4.6.1 of \cite{RS_96}).
			Then, we get
			 $$
				J^{(2, 1)} \leq  N || D g (u_t) - D g(0) ||_{\gamma, p} || u_t - u_s ||_{1 + \gamma, p},
			$$
	  		 $$
				J^{(2, 2)} \leq N || u_s ||_{1 + \gamma, p} || D g (u_t) - D g(u_s) ||_{\gamma, p}.
			 $$  
			  To handle $J^{(2, 1)}$
				 we estimate 
			$|| D g (u_t)   - D g(0) ||_{1, p}$  
			 via Lemma \ref{lemma 4.1}.
			By this we have   	
				  \begin{equation}
					\label{4.3.6}
					  J^{(2, 1)} \leq N N_g || u_t ||_{ 1, p} || u_t - u_s ||_{1 + \gamma, p}.
				  \end{equation}			
			Next, using the fact that $D g^2 \in C^{\theta }$,  we obtain
			$$
				|| D  g (u_t) - D g (u_s)||_{\gamma, p} \leq || D g (u_t) -  D g(u_s)||_{1, p} 
			$$
			  $$
				\leq N || D g (u_t) -  D g (u_s)||_{ p} 
			  $$
			     $$
				 + N   || D^2 g (u_s) (D_i u_t - D_i u_s) ||_p 
			     $$
			      $$
				+ N  ||D_i u_t (D^2 g (u_t) - D^2 g (u_s) ) ||_p 
			      $$
			       $$
				   \leq N N_g ||u_t - u_s||_{1 + \gamma, p} +   N N_g ||  u_t ||_{ 1, p } || u_t - u_s||^{ \theta }_{ \infty}.
			       $$
			Again,  by the embedding theorem 
			 we may replace
			  $|| u_t - u_s||_{ \infty}$
			 by
			    $|| u_t - u_s ||_{ 1 + \gamma, p}$.
			Then, by the above we have
			\begin{equation}
					\label{4.3.7}
			  J^{(2,2)}
						\leq  N N_g  ||u_s||_{ 1 + \gamma, p} 
													 ( ||u_t - u_s||_{1 + \gamma, p} 
																				+   ||u_t||_{1 + \gamma, p} || u_t - u_s||^{\theta}_{ 1 + \gamma, p}) 
			 \end{equation}
			The assertion follows from \eqref{4.3.1} - \eqref{4.3.7}.  			
			\end{proof}

                                      \end{document}